\documentclass[11pt,reqno]{amsart}

\usepackage{amsfonts, amssymb, amsthm, amsmath, bbm, wasysym, enumerate, amsxtra, latexsym, fullpage, amscd, graphics, epic, youngtab, sansmath, mathtools, multicol, url, hyperref, bbm, blindtext, wasysym, comment, datetime, stmaryrd}
\usepackage[shortlabels]{enumitem}
\usepackage[all,cmtip]{xy}
\usepackage[dvipsnames]{xcolor}
\usepackage[makeroom]{cancel}  
\usepackage{youngtab}
\usepackage{ytableau}
\Yvcentermath1 
\usepackage{tikz}
\usetikzlibrary{cd}

\usepgflibrary{shapes.geometric}

\tikzstyle{V}=[draw, fill =black, circle, inner sep=0pt, minimum size=1.5pt]
\tikzstyle{C}=[draw, fill =white, circle, inner sep=0pt, minimum size=1.5pt]
\tikzstyle{over}=[draw=white,double=black,line width=2pt, double distance=.5pt]
\numberwithin{equation}{section}
\setcounter{tocdepth}{1}
\usepackage[margin = 0.9in]{geometry}
\theoremstyle{definition}

\newtheorem{theorem}{Theorem}[section]
\newtheorem{lemma}[theorem]{Lemma}
\newtheorem{proposition}[theorem]{Proposition}
\newtheorem{corollary}[theorem]{Corollary}

\newtheorem{remark}[theorem]{Remark}

\newtheorem{example}[theorem]{Example}

\numberwithin{equation}{section}


\def\<{\langle}
\def\>{\rangle}

\allowdisplaybreaks 



\tikzstyle directed=[postaction={decorate,decoration={markings,
    mark=at position .65 with {\arrow[arrowstyle]{stealth}}}}]
 \tikzstyle reverse directed=[postaction={decorate,decoration={markings,
    mark=at position .65 with {\arrowreversed[arrowstyle]{stealth};}}}]
    \usetikzlibrary{arrows,shapes,positioning}
\usetikzlibrary{decorations.markings}
\tikzstyle arrowstyle=[scale=1]


\title{M-diagram  basis of the Specht module for $(n,n,n)$}
\author[J. Zhu]{Jieru Zhu}

\address{Institut de Recherche en Mathématique et en Physique,  Universite Catholique de Louvain}
    \email{jieru.zhu@uclouvain.be}

\begin{document}

\begin{abstract}
Motivated by the M-diagrams defined by Tymoczko,  we show that these locally non-crossing $\mathfrak{sl}_3$-webs form a basis of the Specht module for the partition $(n,n,n)$.  They further admit a unitriangular base change to both the polytabloid basis and the non-elliptic web basis.
\end{abstract}
\keywords{Specht modules, web bases, cup diagrams, Kazhdan--Lusztig theory}
 
\maketitle 

\section{Introduction}
Webs first appear in the study of tensor invariants for Lie algebras $\mathfrak{sl}_2$ \cite{rumer1932valenztheorie} and $\mathfrak{sl}_3$ \cite{Kup96,  KK99}.  In a related manner,  Temperley-Lieb diagrams,  which are $\mathfrak{sl_2}$-webs,  have seen its application in quantum mechanics and theoretical physics.  Recent developments of web categories \cite{CKM14,  RT16,  TVW17} generalize these results to $\mathfrak{gl}_n(\mathbb{C})$ and inspired a new direction in categorification,  link homology and manifold invariants.   It uses planar graphs to describe homomorphism spaces in a full subcategory of $\mathfrak{gl}_n$-modules,  namely,  the one generated by exterior and symmetric powers of the natural module $V$.  Subsequent results also appear for Lie superalgebras of Type Q \cite{BK21,BDK20},  Type P \cite{DKM21a,DKM21b},  and for deformations of $\mathfrak{gl}_n$ over any locally unital superalgebra \cite{DKMZ23}.      

The homomorphism space $\operatorname{Hom}_{\mathfrak{sl}_n(\mathbb{C})}(V^{\otimes d},V^{\otimes t})$ admits a natural action of the symmetric group $\Sigma_d$.    In several cases recorded \cite{PPR09, RT22,IZ19,HK22},  this module coincides with the Specht module for a suitable partition.  In particular,  given a partition $\lambda$ of $d$,  let $S^{\lambda}$ be the irreducible module associated to the cycle type $\lambda$.  It is commonly known as the Specht module and admits a polytabloid basis parametrized by standard Young tableaux of shape $\lambda$.  

The module $\operatorname{Hom}_{\mathfrak{sl}_3(\mathbb{C})}(V^{\otimes 3n},\mathbb{C})$ is isomorphic to $S^{(n,n,n)}$ \cite{PPR09, RT22} and is spanned by non-elliptic webs,  also known as reduced webs,  which have $3n$ boundary points on a horizontal line arranged at the bottom.  The $\Sigma_d$-action is realized as stacking of braid diagrams.  Russell-Tymoczko \cite{RT22} show that the base change from the polytabloid basis to the web basis is unitriangular,  and conjucture that the transitioning matrix has non-negative entries.  A similar version of this conjecture for $\mathfrak{sl}_2$ and two-row partitions has already proven in \cite{RT19,Rho19,IZ21}.

This article aims to provide another  web basis of $\operatorname{Hom}_{\mathfrak{sl}_3(\mathbb{C})}(V^{\otimes 3n},\mathbb{C})$.  These elements are so-called M-diagrams first proposed by Tymoczko \cite{Ty12},  which by definition satisfy the following two properties:
\begin{enumerate}
\item The diagram consists of intersecting M-shaped ``forks''.
\item The left arcs of these M-shaped components do not cross each other.  Neither do the right arcs.
\end{enumerate}
A handful of examples can be found in Example~\ref{mainexample}.  This non-crossing condition can be seen as an attempt to imitate properties of the Templerly-Lieb diagrams.  In  \cite{Ty12},  they were used as an intermediate agent to derive a non-elliptic web from a standard Young tableaux.  In this article,  we treat these diagrams as the main object of study,  and argue they also form a basis (see Theorem~\ref{unitriangular},  Corollary~\ref{alsouni}.)  

\begin{theorem}
M-diagrams with $3n$ boundary points form a basis of $S^{(n,n,n)}$.  Moreover,  there is a unitriangular base change between any pair of the three bases:  the polytabloid basis,  the non-elliptic reduced web basis,  and the M-diagram basis.
\end{theorem}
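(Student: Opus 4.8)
The plan is to reduce everything to a single triangularity statement — that the M-diagrams expand unitriangularly in the non-elliptic web basis — and then combine this with the Russell--Tymoczko comparison \cite{RT22} between the polytabloid basis and the web basis. All three families are indexed by the same set $\Std(n,n,n)$ of standard Young tableaux of shape $(n,n,n)$, whose cardinality is $\dim S^{(n,n,n)}$, and this common indexing is what makes ``unitriangular base change'' meaningful; so the bulk of the argument is organizing the bookkeeping around it.

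\emph{Step 1 (combinatorial set-up).} Following Tymoczko \cite{Ty12}, I would attach to each $T \in \Std(n,n,n)$ an M-diagram $D_T$ on $3n$ boundary points and check that $T \mapsto D_T$ is a bijection onto the set of M-diagrams; in particular the number of M-diagrams equals $\#\Std(n,n,n)$. In parallel, recall the Khovanov--Kuperberg-type bijection $T \mapsto W_T$ between $\Std(n,n,n)$ and non-elliptic $\mathfrak{sl}_3$-webs on $3n$ points, and fix the partial order $\preceq$ on $\Std(n,n,n)$ with respect to which \cite{RT22} establishes that the transition matrix between the polytabloid basis $\{e_T\}$ and the web basis $\{W_T\}$ is unitriangular.

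\emph{Step 2 (the main step).} Regard each M-diagram $D_T$ as an element of $\operatorname{Hom}_{\mathfrak{sl}_3(\mathbb{C})}(V^{\otimes 3n},\mathbb{C}) \cong S^{(n,n,n)}$ and resolve each of its crossings using the $\mathfrak{sl}_3$ skein relation, obtaining an expansion $D_T = \sum_S a_{S,T}\, W_S$ in the non-elliptic web basis. I would prove this expansion is unitriangular: the web $W_T$ occurs with coefficient $1$, and every other $W_S$ occurring has $S \prec T$. Concretely, by the locally non-crossing condition each crossing of $D_T$ is a crossing between the left arc of one fork and the right arc of another; one of its two skein resolutions preserves the associated tableau-label while the other strictly decreases it in $\preceq$. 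Taking the label-preserving resolution at every crossing recovers precisely the reduced web $W_T$ obtained from $D_T$ by Tymoczko's passage, while any choice involving at least one decreasing resolution yields only webs $W_S$ with $S \prec T$. Carrying this out rigorously — identifying which resolution is label-preserving, controlling the interaction of the resulting local moves via the non-crossing hypothesis, and confirming the top term is $W_T$ rather than merely \emph{some} non-elliptic web — is the part I expect to be the main obstacle; I anticipate an induction on $n$ (or on the number of crossings), organized around an explicit description of how the forks of $D_T$ sit relative to $W_T$.

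\emph{Step 3 (conclusion).} A unitriangular matrix is invertible, so $\{D_T : T \in \Std(n,n,n)\}$ is a basis of $\operatorname{Hom}_{\mathfrak{sl}_3(\mathbb{C})}(V^{\otimes 3n},\mathbb{C}) \cong S^{(n,n,n)}$. Since products and inverses of matrices that are unitriangular with respect to a fixed order are again unitriangular, composing the matrix $(a_{S,T})$ from Step 2 with the Russell--Tymoczko matrix from Step 1 (and inverting where needed) gives unitriangular base changes between each of the three pairs of bases, which is exactly the assertion of the theorem.
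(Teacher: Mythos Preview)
Your outline is sound but takes a genuinely different route from the paper. You propose to show directly that each M-diagram $D_T$ expands unitriangularly in the non-elliptic web basis (your Step~2), and then to compose with Russell--Tymoczko's $P\to W$ result. The paper instead establishes the complementary direction $P\to M$: it rewrites every polytabloid diagram as a $\mathbb{Z}$-combination of M-diagrams by resolving its \emph{left--left and right--right} crossings via a short list of explicit six-point identities (Proposition~\ref{prop:resolvecrossing}), inducting on the pair (number of inversions of the boundary word, number $c(m)$ of such crossings). Each application of a six-point identity produces terms that either have strictly fewer inversions or keep the same boundary word with strictly smaller $c(m)$, so the process terminates in M-diagrams while staying entirely inside the class of fork diagrams. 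What this buys is that one never has to reduce a general web to non-elliptic form. Your Step~2, by contrast, must confront exactly that reduction: after resolving the left--right crossings of $D_T$ via the skein relation you typically obtain webs with internal squares, and tracking tableau labels through the subsequent (R3) simplifications is precisely the obstacle you flag. The paper does remark that the statement ``$W_T$ is the top summand of $D_T$'' is implicit in \cite{Ty12}, so your route is viable; it just is not the argument given here.
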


Web bases of hom-spaces are well-studied.  For example,  cellular bases  \cite{Eli15} and canonical basis  \cite{MPT14, Tu14a} were explicitly constructed.   However,   little is known regarding connections to Kazhdan-Lusztig (KL) theory.   For example,  the standard basis,  KL basis and Deodhar's basis \cite{Deo90} of the Hecke algebra are categorified by the standard bimodules,  the indecomposible Soergel bimodules \cite{Soe90},  and the Bott-Samelson bimodules,  respectively.   Their counterparts via webs still need to be studied.   One link has appeared,  where the work of Tymoczko \cite{Ty12} implies that each non-elliptic web appears as the top summand of the an M-diagram.  We therefore raise the question of whether the transitional matrix is related to the KL polynomials,   in the context of parabolic Hecke modules.   We also hope that the M-diagram basis serves as an intermediate agent in computing the transitioning matrix between the polytabloid basis and the non-elliptic web basis,  and provides machinery in proving the positivity conjecture raised in \cite{RT22}.   

\section*{Acknowledgement}
The author thanks Jonathan Kujawa,  Jaeseong Oh,  Daniel Tubbenhauer and Tianyuan Xu  for helpful discussions.  The author acknowledges the Fonds Spécial de Recherche (FSR) postdoc fellowship at the Université catholique de Louvain for supporting her work,  and IRMP at UC Louvain for the work environment.

\section{Main Results}
We use the notation $(n^3)$ to denote the partition $(n,n,n)$.  A \emph{Young diagram} associated to $(n^3)$ is the collection of $3n$ boxes arranged into a $3\times n$ array.  A \emph{standard Young tableaux} of shape $(n^3)$ is a filling of such a diagram with $1,2,\dots, 3n$ such that the entries increase along each row and column.  Here is one example,  with $n_1=3n-2$,  $n_2=3n-1$,  $n_3=3n$ to save space.
\begin{align}
\ytableausetup{smalltableaux}
T_0= \begin{ytableau}
1 & 4 &\none& \none[\cdots] & \none&n_1\\
2 & 5 &\none& \none[\cdots] & \none&n_2\\
3 & 6 & \none&\none[\cdots] &\none& n_3
\end{ytableau} \label{eq:T0}
\end{align}
Let $\mathcal{T}^n$ be the set of standard Young tableaux of shape $(n^3)$.  The Specht module $S^{(n^3)}$ is the irreducible module for the symmetric group $S_{3n}$ associated to the cycle type $(n^3)$.  It has classic construction with a \emph{polytabloid basis} $\{v_T\}$,  where $T$ ranges over elements of $\mathcal{T}^n$.  We omit the explicit formulation of the action of $S_{3n}$ and refer the reader to \cite{Fu97}.

The group $S_{3n}$ acts on $\mathcal{T}^n$ (on the right) by permuting its entries.  Any element $T\in \mathcal{T}^n$ can be expressed uniquely as $T=T_0.\sigma$ for some $\sigma\in S_{3n}$.  We impose the partial order on $\mathcal{T}^n$ by the left weak Bruhat order on the corresponding permutation $\sigma\in S_{3n}$.  To be more precise,  for $T_1,T_2 \in \mathcal{T}^n$,  $T_1=T_0.\sigma_1$ and $T_2=T_0.\sigma_2$,  we say $T_1\leq T_2$,  if and only if,  for some reduced expressions  of $\sigma_1$ and $\sigma_2$, the former is a subword of a letter with consecutive letters,  starting with the first letter in $\sigma_2$.

The \emph{$\mathfrak{sl}_3$-webs} are planar graphs with directed vertices and directed edges between them.  Each boundary point is incident to exactly one edge;  an internal vertex is always trivalent,  where the orientation on its adjacent edges is in one of the two forms:
\begin{align*}
\begin{tikzpicture}[scale=0.6]
		\node [] (0) at (0, 0) {};
		\node [] (1) at (0, 1) {};
		\node [] (2) at (-1, -1) {};
		\node [] (3) at (1, -1) {};
		\draw [thick,directed] (0.center) to (2.center);
		\draw [thick,directed] (0.center) to (3.center);
		\draw [thick,directed] (0.center) to (1.center);
\end{tikzpicture},\hspace{.5 in}
\begin{tikzpicture}[scale=0.6]
		\node [] (0) at (0, 0) {};
		\node [] (1) at (0, 1) {};
		\node [] (2) at (-1, -1) {};
		\node [] (3) at (1, -1) {};
		\draw [thick,reverse directed] (0.center) to (2.center);
		\draw [thick,reverse directed] (0.center) to (3.center);
		\draw [thick,reverse directed] (0.center) to (1.center);
\end{tikzpicture}.
\end{align*}
Such diagrams are equivalent up to isotopy and subject to further local relations below:
\begin{align}
\begin{tikzpicture}[baseline={(0,-.2)}, scale = 0.4]
\node at (.5,-1.3) {};
\node at (2.5,-1.3) {};
\draw[thick,  directed] (.5,0) .. controls +(1,+1.5)  .. +(2,0);
\node at (.5,.3) { };
\node at (2.5,.3) { };
\draw[thick, reverse directed] (.5,0) .. controls +(1,-1.5)  .. +(2,0);
\end{tikzpicture} &= \begin{tikzpicture}[baseline={(0,-.2)}, scale = 0.4]
\node at (.5,-1.3) {};
\node at (2.5,-1.3) {};
\draw[thick,  reverse directed] (.5,0) .. controls +(1,+1.5)  .. +(2,0);
\node at (.5,.3) { };
\node at (2.5,.3) { };
\draw[thick, directed] (.5,0) .. controls +(1,-1.5)  .. +(2,0);
\end{tikzpicture} = 3,   \tag{R1}  \\ 
\begin{tikzpicture}[baseline={(0,-.2)}, scale = 0.5]
\node at (-1,0) {};
\draw[thick, directed] (-.5,0) -- (.5,0) node[right] {};
\node at (.5,-1.3) {};
\node at (2.5,-1.3) {};
\draw[thick, reverse directed] (.5,0) .. controls +(1,+1.5)  .. +(2,0);
\node at (.5,.3) { };
\node at (2.5,.3) { };
\draw[thick, reverse directed] (.5,0) .. controls +(1,-1.5)  .. +(2,0);
\draw[thick, directed] (2.5,0) -- (3.5,0) node[right] {};
\end{tikzpicture}
\hspace{1mm}
&= -2  \hspace{2mm}
\begin{tikzpicture}[baseline={(0,-.2)}, scale = 0.5]
\node at (-1,0) {};
\draw[thick, directed] (-.5,0) -- (1.5,0) node[right] {};
\end{tikzpicture},   \tag{R2}   \\ 
\begin{tikzpicture}[baseline={(0,-.2)}, scale = 0.5]
\node at (.5,-1.3) {};
\node at (2.5,-1.3) {};
\draw[thick, directed] (.5,.5) .. controls +(1,+.5)  .. +(2,0);
\node at (.5,.3) { };
\node at (2.5,.3) { };
\draw[thick, reverse directed] (.5,-.5) .. controls +(1,-.5)  .. +(2,0);
\node at (2.5,.3) {};
\draw[thick, directed] (2.5,-.5) -- (2.5,.5) node[right] {};
\node at (-1,0) {};
\draw[thick, reverse directed] (.5,-.5) -- (.5,.5) node[right] {};
\node at (-.3,1.3) {};
\draw[thick,reverse directed] (0,1) -- (.5,.5) node[right] {};
\node at (3.3,1.3) {};
\draw[thick, reverse directed] (2.5,.5) -- (3,1) node[right] {};
\node at (-.3,-1.3) {};
\draw[thick, reverse directed] (.5,-.5) -- (0,-1) node[right] {};
\node at (3.3,-1.3) {};
\draw[thick, directed] (2.5,-.5) -- (3,-1) node[right] {};
\end{tikzpicture}
&= 
\begin{tikzpicture}[baseline={(0,-.2)}, scale = 0.5]
\node at (2.5,.8) {};
\node at (2.5,-.8) {};
\draw[thick, reverse directed] (2.5,-.5) -- (2.5,.5) node[right] {};
\node at (1.5,-.8) {};
\node at (1.5,.8) {};
\draw[thick, directed] (1.5,-.5) -- (1.5,.5) node[right] {};
\end{tikzpicture}
\hspace{2mm}
+ 
\hspace{2mm}
\begin{tikzpicture}[baseline={(0,-.2)}, scale = 0.6]
\node at (2.5,.8) {};
\node at (2.5,-.8) {};
\draw[thick,  reverse directed] (2.5,-.5) -- (1.5,-.5) node[right] {};
\node at (1.5,-.8) {};
\node at (1.5,.8) {};
\draw[thick, reverse directed] (1.5,.5) -- (2.5,.5) node[right] {};
\end{tikzpicture}.\tag{R3}
\end{align}

For this article we will only consider webs whose boundary points all lie in a horizontal line,  drawn at the bottom of the picture.  We further require that the orientation of the boundary points flow out of the web and into this horizontal line.   We will assume this orientation throughout the article and omit it later on.  Let $D^{(n^3)}$ be the $\mathbb{C}$-vector space spanned by these webs.   A web is called \emph{non-elliptic},  or \emph{reduced},  if none of the local relations (R1,R2,R3) can be applied.  Let $\mathcal{W}^n$ be the set of reduced webs with $3n$ boundary points at the bottom.  It has been shown in \cite{PPR09} that $\mathcal{W}^n$ form a basis of  $D^{(n^3)}$.  An example of such a web is 
\begin{align*}
W_0=\begin{tikzpicture}[baseline=0]
		\node [] (3) at (-1, -0.5) {};
		\node [] (4) at (0, 0.5) {};
		\node [] (5) at (1, -0.5) {};
		\node [] (6) at (0, -0.5) {};
		\draw [thick, directed,  in=90,  out=180] (4.center) to (3.center);
		\draw [thick, directed,  in=90,  out=0](4.center) to (5.center);
		\draw  [thick, directed] (4.center) to (6.center);
\end{tikzpicture}\:
\begin{tikzpicture}[baseline=0,scale=0.8]
		\node [] (3) at (-1, -0.5) {};
		\node [] (4) at (0, 0.5) {};
		\node [] (5) at (1, -0.5) {};
		\node [] (6) at (0, -0.5) {};
		\draw [thick, directed,  in=90,  out=180] (4.center) to (3.center);
		\draw [thick, directed,  in=90,  out=0](4.center) to (5.center);
		\draw  [thick, directed] (4.center) to (6.center);
\end{tikzpicture}
\cdots
\begin{tikzpicture}[baseline=0,scale=0.8]
		\node [] (3) at (-1, -0.5) {};
		\node [] (4) at (0, 0.5) {};
		\node [] (5) at (1, -0.5) {};
		\node [] (6) at (0, -0.5) {};
		\draw [thick, directed,  in=90,  out=180] (4.center) to (3.center);
		\draw [thick, directed,  in=90,  out=0](4.center) to (5.center);
		\draw  [thick, directed] (4.center) to (6.center);
\end{tikzpicture}.
\end{align*}

The space $D^{(n^3)}$ admits a right action of the symmetric group,  where each simple transiposition $s_i=(i \: j)$,  $j=i+1$,  acts via stacking a swap at the bottom of the picture,  as illustrated below,  where the orientation flows from the diagram $D$.  The crossing should be interpreted via the skein relation below.    
\begin{align*}
\begin{tikzpicture}[baseline={(0,-.5)}, scale = 0.8]
\draw[dotted] (-.25,0) -- (3,0) -- (3,-.9) -- (-.25,-.9) -- cycle;
\node at (1.3,-.4) {$D$};
\end{tikzpicture}\hspace{.1 in}.s_i
=
\begin{tikzpicture}[baseline={(0,-.3)}, yscale = -0.8]
\draw[dotted] (-.25,0) -- (3,0) -- (3,-.9) -- (-.25,-.9) -- cycle;
\node at (1.3,-.4) {$D$};
\draw[thick] (0,0) .. controls +(0,.5)  .. +(0,1);
\draw[thick] (0.9,0) .. controls +(0,.5)  .. +(0,1);
\begin{footnotesize}
\node at (.5,.5) {$\cdots$};
\node at (2.5,.5) {$\cdots$};
\node at (1.2,1.3) {$i$};
\node at (1.7,1.3) {$j$};
\draw[thick] (2,0) .. controls +(0,.5)  .. +(0,1);
\draw[thick] (2.8,0) .. controls +(0,.5)  .. +(0,1);
\end{footnotesize}
\draw[thick] (1.2,0) .. controls +(.25,.5)  .. +(.5,1);
\draw[thick] (1.7,0) .. controls +(-.25,.5)  .. +(-.5,1);
\end{tikzpicture},\hspace{.5 in}
\begin{tikzpicture}[baseline=0,scale=0.4]
		\node  (0) at (-1, 1.5) {};
		\node  (1) at (1, -1.5) {};
		\node (2) at (-1, -1.5) {};
		\node  (3) at (1, 1.5) {};
		\draw  [thick,  reverse directed](0.center) to (1.center);
		\draw  [thick, reverse directed](3.center) to (2.center);
\end{tikzpicture}=
\begin{tikzpicture}[baseline=0,scale=0.4]
		\node (0) at (-1, 1.5) {};
		\node (1) at (0, 0.5) {};
		\node (2) at (1, 1.5) {};
		\node (3) at (-1, -1.5) {};
		\node (4) at (1, -1.5) {};
		\node (5) at (0, -0.5) {};
		\draw [thick, directed] (1.center) to (0.center);
		\draw [thick, directed](1.center) to (2.center);
		\draw [thick, directed] (1.center) to (5.center);
		\draw [thick, directed](3.center) to (5.center);
		\draw [thick, directed](4.center) to (5.center);
\end{tikzpicture}
 + \begin{tikzpicture}[baseline=0,scale=0.4]
		\node  (0) at (-1, -1.5) {};
		\node  (1) at (-1, 1.5) {};
		\node  (2) at (1, -1.5) {};
		\node  (3) at (1, 1.5) {};
		\draw [thick, directed] (0.center) to (1.center);
		\draw [thick, directed] (2.center) to (3.center);
\end{tikzpicture}.
\end{align*}

We first cite some known results.
\begin{theorem}{\cite[Lemma~4.2]{PPR09},\cite[Theorem~3.9]{IZ19}, \cite[Theorem~6.1]{RT22}}
There is an isomorphism  $S^{(n^3)}\simeq D^{(n^3)}$ as right $S_{3n}$-modules,  sending $v_{T_0}$ to $W_0$. 
\end{theorem}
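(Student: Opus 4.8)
The two sides both carry right $S_{3n}$-actions and $S^{(n^3)}$ is irreducible, so it suffices to produce one nonzero $S_{3n}$-equivariant map between them and then match dimensions. My plan is to realize $D^{(n^3)}$ as an invariant space and feed it into Schur--Weyl duality. Set $V=\CC^3$. Evaluating a web as an intertwiner of $\mathfrak{sl}_3$-modules identifies $D^{(n^3)}$ with $\operatorname{Hom}_{\mathfrak{sl}_3}(V^{\otimes 3n},\CC)$, sending the reduced webs $\mathcal{W}^n$ to a basis; this is precisely the content that makes \cite{PPR09} applicable, and in particular $\dim D^{(n^3)}=|\mathcal{W}^n|$.

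The crucial compatibility is that the web $S_{3n}$-action (stacking a crossing and resolving it by the skein relation) corresponds under this identification to the genuine place-permutation action on $V^{\otimes 3n}$. On a single pair of strands $V\otimes V=\operatorname{Sym}^2V\oplus\wedge^2V$, the identity term acts as $P_{\operatorname{Sym}}+P_{\wedge}$, while the ``H''-term (a source--sink pair) factors through $\wedge^2V\cong V^*$ and hence equals $c\,P_{\wedge}$; the scalar $c$ is read off from the closed bigon, which by (R2) is $-2$, so $c=-2$. Therefore the skein-resolved crossing acts as $P_{\operatorname{Sym}}+(1+c)P_{\wedge}=P_{\operatorname{Sym}}-P_{\wedge}$, which is exactly the transposition $\tau$ on $V\otimes V$. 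Hence stacking crossings realizes place permutation and $D^{(n^3)}\cong\operatorname{Hom}_{\mathfrak{sl}_3}(V^{\otimes 3n},\CC)$ as right $S_{3n}$-modules.

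Next I apply Schur--Weyl duality for $GL_3$ on $V^{\otimes 3n}$. The $\mathfrak{sl}_3$-trivial isotypic component is cut out by the unique rectangular three-row partition $(n^3)$: the Schur functor $\mathbb{S}_{(n^3)}(V)=\det^{\,n}$ restricts to the trivial $\mathfrak{sl}_3$-module, and no other $\mu\vdash 3n$ with at most three rows does, since such a restriction is irreducible and one-dimensional only for a rectangle. Its multiplicity space is the Specht module, giving $\operatorname{Hom}_{\mathfrak{sl}_3}(V^{\otimes 3n},\CC)\cong S^{(n^3)}$ as right $S_{3n}$-modules; comparing dimensions recovers $|\mathcal{W}^n|=|\mathcal{T}^n|$ and settles the statement at the level of abstract modules.

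It remains to pin down the generator, i.e.\ that the isomorphism can be normalized so that $v_{T_0}\mapsto W_0$. The web $W_0$ is the tensor product of $n$ copies of the determinant covariant $V^{\otimes 3}\to\CC$, $e_a\otimes e_b\otimes e_c\mapsto\varepsilon_{abc}$, one per column triple $\{3k-2,3k-1,3k\}$ of $T_0$. This covariant is totally antisymmetric, so $W_0\cdot c=\sign(c)\,W_0$ for $c$ in the column stabilizer $C_{T_0}$, matching the column antisymmetry of $v_{T_0}$; and the Garnir relations that (together with column antisymmetry) present $S^{(n^3)}$ as the cyclic module on $v_{T_0}$ translate, under place permutation, into antisymmetrizations over four legs, which annihilate $W_0$ because $\wedge^4V=0$ when $\dim V=3$. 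Checking these two families for $W_0$ therefore extends $v_{T_0}\mapsto W_0$ to a well-defined $S_{3n}$-map, which must be the isomorphism above. The \emph{main obstacle} is the compatibility step of the second paragraph: verifying with correct signs and normalization that the skein crossing equals the place-permutation $\tau$ (equivalently that the (R2)-coefficient is the bigon value $c=-2$), and then that the Garnir relations correspond exactly to the vanishing $\wedge^4V=0$; the remaining dimension equality is supplied by \cite{PPR09}.
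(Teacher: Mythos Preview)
The paper does not supply its own proof of this statement; it is quoted from the literature with citations to \cite{PPR09}, \cite{IZ19}, and \cite{RT22}, so there is no in-paper argument to compare against.

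Your proposal is a correct reconstruction of the standard proof and is essentially the route taken in the cited references. The three ingredients are all sound: (i) Kuperberg's evaluation identifies $D^{(n^3)}$ with $\operatorname{Hom}_{\mathfrak{sl}_3}(V^{\otimes 3n},\mathbb{C})$; (ii) your eigenvalue computation via (R2) correctly shows that the skein crossing acts as $P_{\operatorname{Sym}}-P_{\wedge}=\tau$, so the diagrammatic $S_{3n}$-action agrees with place permutation; (iii) Schur--Weyl duality then singles out $S^{(n^3)}$ as the multiplicity space of the unique three-row rectangle, since $\mathbb{S}_{(n^3)}(V)$ is the only Schur functor with $\le 3$ rows restricting to the trivial $\mathfrak{sl}_3$-module.

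One small clarification on the generator match: a Garnir element is not literally an antisymmetrizer over four tensor positions, but once it is multiplied against the column antisymmetry already built into $W_0=\det^{\otimes n}$ it becomes one, and that is precisely what makes your appeal to $\wedge^4 V=0$ legitimate. With that understood, checking column antisymmetry and the Garnir relations for $W_0$ produces a nonzero $S_{3n}$-map $S^{(n^3)}\to D^{(n^3)}$ sending $v_{T_0}\mapsto W_0$, which is then an isomorphism by irreducibility and the dimension count $|\mathcal{W}^n|=|\mathcal{T}^n|$ from \cite{PPR09}. So your argument is complete; it simply fills in what the present paper leaves to citation rather than offering an alternative approach.
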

\begin{theorem}{\cite[Theorem~6.4]{RT22}}\label{unitriangular}
The base change from $\{v_T\}_{T\in \mathcal{T}^n}$ to $\mathcal{W}^n$ is unitriangular.  
\end{theorem}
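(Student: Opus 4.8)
The plan is to reduce the claim to a computation carried out entirely on the web side. Let $\phi\colon S^{(n^3)} \simeq D^{(n^3)}$ be the right $S_{3n}$-equivariant isomorphism with $\phi(v_{T_0}) = W_0$. I would fix the Khovanov--Kuperberg growth bijection $T \mapsto W_T$ from $\mathcal{T}^n$ to $\mathcal{W}^n$, which records the all-outgoing boundary of $3n$ points by the lattice walk read off from the columns of $T$ and grows a unique non-elliptic web, with $W_{T_0} = W_0$; since both sets are enumerated by the standard Young tableaux of shape $(n^3)$ this map is a bijection \cite{KK99}. Transport the weak Bruhat order through it, so that $W_{T'} < W_T$ exactly when $T' < T$. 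Writing $T = T_0.\sigma$ with $\sigma$ reduced, the classical straightening of polytabloids \cite{Fu97} gives $v_{T_0}.\sigma = v_T + \sum_{T' < T} a_{T'} v_{T'}$, hence by equivariance $W_0.\sigma = \phi(v_T) + \sum_{T' < T} a_{T'}\, \phi(v_{T'})$. Thus, arguing by strong induction on the weak order, the theorem follows once I establish the web-side expansion
\[
W_0.\sigma = W_T + \sum_{W < W_T} b_W\, W, \qquad b_W \in \mathbb{C},
\]
for then $\phi(v_T) = W_0.\sigma - \sum_{T'<T} a_{T'}\,\phi(v_{T'}) = W_T + (\text{terms strictly below } W_T)$, using $\phi(v_{T'}) = W_{T'} + (\text{lower})$ and $W_{T'} < W_T$ from the inductive hypothesis.

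I would prove the displayed web-side expansion by a second induction, this time on $\ell(\sigma)$. The base case $\sigma = e$ is $W_0 = W_{T_0}$. For the step, factor $\sigma = \sigma' s_i$ in reduced form (so $\ell(\sigma) = \ell(\sigma')+1$), set $T'' = T_0.\sigma'$ and $T = T''.s_i$, and apply the inductive hypothesis $W_0.\sigma' = W_{T''} + (\text{lower})$. Acting on the right by $s_i$ and expanding each inserted crossing through the skein relation resolves it into a trivalent term and a parallel-strand (identity) term; the identity term returns every web unchanged, while the trivalent term inserts a local $H$-gadget that I then return to non-elliptic form using the relations (R1), (R2), (R3).

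The crux is a local analysis of the growth rules showing that the trivalent resolution at position $i$ of the leading web $W_{T''}$, once reduced, equals $W_T$ plus a combination of webs indexed strictly below $T$, together with the order relation $W_{T''} < W_T$; granting this, the identity term contributes only the strictly lower web $W_{T''}$, and the coefficient of $W_T$ is $1$. The main obstacle, which I expect to absorb most of the work, is controlling the strictly lower terms: for each $W < W_{T''}$ I must verify that both resolutions of $s_i$ applied to $W$ produce, after reduction, only webs strictly below $W_T$. This is delicate precisely because the skein action is not \emph{a priori} order-preserving --- a single crossing may lower a web or raise it by one covering step --- so the argument reduces to showing that the transported weak Bruhat order is compatible with the elementary forks created by the growth algorithm, namely that applying $s_i$ can raise a web by at most one cover and, at the top, carries $W_{T''}$ precisely to $W_T$ without overshooting. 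Once this compatibility is verified the expansion holds, closing both inductions and proving that the base change from $\{v_T\}_{T \in \mathcal{T}^n}$ to $\mathcal{W}^n$ is unitriangular.
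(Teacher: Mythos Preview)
This theorem is not proved in the paper at all: it is simply cited as \cite[Theorem~6.4]{RT22} and used as a black box. There is therefore no ``paper's own proof'' to compare your proposal against; the authors import the result from Russell--Tymoczko and move on.

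As for your proposal on its own merits, it is a plan rather than a proof, and you have correctly identified where the real work lies. The gap is genuine: you state that the ``main obstacle'' is to show that applying $s_i$ to any web $W$ strictly below $W_{T''}$ yields, after reduction, only webs strictly below $W_T$, and that the skein resolution at the leading term produces $W_T$ with coefficient~$1$. You do not carry out either of these verifications. The first is exactly the hard part of Russell--Tymoczko's argument, and it does not follow from general nonsense about growth rules; one needs an explicit combinatorial invariant (in their paper, a shadow or nesting statistic on webs) that is monotone under the relevant local moves and that pins down the leading term. Your inductive scheme is reasonable scaffolding, but without that invariant the step ``both resolutions of $s_i$ applied to $W$ produce only webs strictly below $W_T$'' is an assertion, not an argument. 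If you want to turn this into a self-contained proof, you would need either to reproduce the Russell--Tymoczko shadow analysis or to find an alternative monotone statistic on $\mathcal{W}^n$ compatible with the transported weak order.
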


Let $S_{(3^n)}$ be the group of columm stablizers (i.e.  permutations which stabilizes the set of entries in each column) of $T_0$ in (\ref{eq:T0}).  Similarly,  let $S_{(n^3)}$ be the group of row stablizers of $T_0$.  They are  both conjugate to the Young subgroups  associated to the partition $(3^n)$,  $(n^3)$,  respectively.  Let $\mathcal{D}^{(3^n)}$ and $\mathcal{D}^{(n^3)}$  be the sets of minimal-length right coset representatives for $S_{(3^n)}$ and $S_{(n^3)}$,  respectively.

A \emph{fork diagram} is a web of the form $W_0.\sigma$ for 
 $\sigma\in \mathcal{D}^{(3^n)}$.   This is to say that $\sigma$ does not permute the relative order of the three boundary points within each arc in $W_0$.  Pictorially,  a fork diagram is such that any of its internal vertices is adjacent to  three  boundary points.  One can refer to  Examples~\ref{mainexample}  for a pictorial illustration.  
 
We label the endpoints of a web by $1,2,\dots, 3n$,  from left to right.    Given a fork diagram $w$ and positive integers $a<b<c$, we call $(a,b,c)$ an \emph{arc} in $w$ if the boundary vertices $a,b,c$ are connected by a trivalent vertex in $w$.   It is straightforward to see that any boundary point of a fork diagram belongs to some arc.  We will also refer to $(a,b)$ as a \emph{left} arc and $(b,c)$ a \emph{right} arc.  We will refer to $a,b,c$ as the \emph{left,  middle},  and \emph{right endpoints} of the arc,  respectively. 
 
 A fork diagram $W_0.\sigma$ is further called a \emph{polytabloid diagram} if $\sigma\in \mathcal{D}^{(n^3)}$.   In other words,  $\sigma$ also does not permute the relative order of all left  (middle,  right,  respectively) endpoints.  We denote the set of polytabloid diagram with $3n$ boundary points by $\mathcal{P}^n$.  It is  a straightforward combinatorial fact that $\mathcal{D}^{(3^n)}\cap\mathcal{D}^{(n^3)}$ is in bijection with $\mathcal{T}^n$,  therefore we have a bijection of sets $\mathcal{T}^n\simeq \mathcal{P}^n $.  This bijection will be explained explicitly,  after we set up some notations.

The following lemmas show that fork diagrams are well-defined up to their arcs, i.e.   the partition of $\{1,2,\dots,3n\}$ into $3$-element subsets.   These results are well-known; for example one could verify them directly using the skein relation.

\begin{lemma} The following relations hold.
\begin{equation*}
\begin{tikzpicture}[scale=0.25,baseline=0]
		\node [] (0) at (0, 0) {};
		\node [] (1) at (0, 3) {};
		\node [] (2) at (2.5, -2.5) {};
		\node [] (3) at (-2.5, -2.5) {};
		\node [] (4) at (1.75, 2.75) {};
		\node [] (5) at (-1.75, 0) {};
		\node [] (6) at (0, -3.5) {};
		\draw[thick,directed] (0.center) to (1.center);
		\draw[thick,directed] (0.center) to (3.center);
		\draw[thick,directed] (0.center) to (2.center);
		\draw[thick,out=90,in=200] (5.center) to (4.center);
		\draw[thick,in=150,out=270,  reverse directed] (5.center) to (6.center);
\end{tikzpicture}
=
\begin{tikzpicture}[scale=0.25,baseline=0]
		\node [] (0) at (0, 0) {};
		\node [] (1) at (0, 3) {};
		\node [] (2) at (2.5, -2.5) {};
		\node [] (3) at (-2.5, -2.5) {};
		\node [] (4) at (1.75, 2.75) {};
		\node [] (5) at (-1.75, 0) {};
		\node [] (6) at (0, -3.5) {};
		\draw[thick,directed] (0.center) to (1.center);
		\draw[thick,directed] (0.center) to (3.center);
		\draw[thick,directed] (0.center) to (2.center);
		\draw[thick,out=60,in=270, directed] (6.center) to (4.center);
\end{tikzpicture},
\hspace{.2 in}
\begin{tikzpicture}[scale=0.25,baseline=0]
		\node [] (0) at (0, 0) {};
		\node [] (1) at (0, 3) {};
		\node [] (2) at (2.5, -2.5) {};
		\node [] (3) at (-2.5, -2.5) {};
		\node [] (4) at (1.75, 2.75) {};
		\node [] (5) at (-1.75, 0) {};
		\node [] (6) at (0, -3.5) {};
		\draw[thick,directed] (0.center) to (1.center);
		\draw[thick,directed] (0.center) to (3.center);
		\draw[thick,directed] (0.center) to (2.center);
		\draw[thick,out=90,in=200,reverse directed] (5.center) to (4.center);
		\draw[thick,out=150,in=270] (6.center) to (5.center);
\end{tikzpicture}
=
\begin{tikzpicture}[scale=0.25,baseline=0]
		\node [] (0) at (0, 0) {};
		\node [] (1) at (0, 3) {};
		\node [] (2) at (2.5, -2.5) {};
		\node [] (3) at (-2.5, -2.5) {};
		\node [] (4) at (1.75, 2.75) {};
		\node [] (5) at (-1.75, 0) {};
		\node [] (6) at (0, -3.5) {};
		\draw[thick,directed] (0.center) to (1.center);
		\draw[thick,directed] (0.center) to (3.center);
		\draw[thick,directed] (0.center) to (2.center);
		\draw[thick,out=60,in=270, reverse directed] (6.center) to (4.center);
\end{tikzpicture}, \hspace{.2 in}
\begin{tikzpicture}[scale=0.25,baseline=0]
		\node [] (0) at (0, 0) {};
		\node [] (1) at (0, 3) {};
		\node [] (2) at (2.5, -2.5) {};
		\node [] (3) at (-2.5, -2.5) {};
		\node [] (4) at (1.75, 2.75) {};
		\node [] (5) at (-1.75, 0) {};
		\node [] (6) at (0, -3.5) {};
		\draw[thick,reverse directed] (0.center) to (1.center);
		\draw[thick,reverse directed] (0.center) to (3.center);
		\draw[thick,reverse directed] (0.center) to (2.center);
		\draw[thick,out=90,in=200] (5.center) to (4.center);
		\draw[thick,in=150,out=270,reverse directed] (5.center) to (6.center);
\end{tikzpicture}
=
\begin{tikzpicture}[scale=0.25,baseline=0]
		\node [] (0) at (0, 0) {};
		\node [] (1) at (0, 3) {};
		\node [] (2) at (2.5, -2.5) {};
		\node [] (3) at (-2.5, -2.5) {};
		\node [] (4) at (1.75, 2.75) {};
		\node [] (5) at (-1.75, 0) {};
		\node [] (6) at (0, -3.5) {};
		\draw[thick,reverse directed] (0.center) to (1.center);
		\draw[thick,reverse directed] (0.center) to (3.center);
		\draw[thick,reverse directed] (0.center) to (2.center);
		\draw[thick,out=60,in=270, directed] (6.center) to (4.center);
\end{tikzpicture},
\hspace{.2 in}
\begin{tikzpicture}[scale=0.25,baseline=0]
		\node [] (0) at (0, 0) {};
		\node [] (1) at (0, 3) {};
		\node [] (2) at (2.5, -2.5) {};
		\node [] (3) at (-2.5, -2.5) {};
		\node [] (4) at (1.75, 2.75) {};
		\node [] (5) at (-1.75, 0) {};
		\node [] (6) at (0, -3.5) {};
		\draw[thick,reverse directed] (0.center) to (1.center);
		\draw[thick,reverse directed] (0.center) to (3.center);
		\draw[thick,reverse directed] (0.center) to (2.center);
		\draw[thick,out=90,in=200] (5.center) to (4.center);
		\draw[thick,out=150,in=270,reverse directed] (6.center) to (5.center);
\end{tikzpicture}
=
\begin{tikzpicture}[scale=0.25,baseline=0]
		\node [] (0) at (0, 0) {};
		\node [] (1) at (0, 3) {};
		\node [] (2) at (2.5, -2.5) {};
		\node [] (3) at (-2.5, -2.5) {};
		\node [] (4) at (1.75, 2.75) {};
		\node [] (5) at (-1.75, 0) {};
		\node [] (6) at (0, -3.5) {};
		\draw[thick,reverse directed] (0.center) to (1.center);
		\draw[thick,reverse directed] (0.center) to (3.center);
		\draw[thick,reverse directed] (0.center) to (2.center);
		\draw[thick,out=60,in=270, reverse directed] (6.center) to (4.center);
\end{tikzpicture}.
\end{equation*}
\end{lemma}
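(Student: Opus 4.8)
The plan is to verify each of the four identities by a purely local computation. Each one takes place inside a disk carrying only the displayed endpoints, so it suffices to argue in the planar diagram calculus, using only isotopy together with the defining relations (R1)--(R3) and the skein relation that resolves a crossing.

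Before computing anything I would cut down the work by exploiting the symmetry of the picture. Simultaneously reversing every arrow is an involution of the $\mathfrak{sl}_3$-web calculus --- it fixes each of (R1), (R2), (R3) and the skein relation --- and it pairs the four displayed identities into two, so it suffices to treat, say, the first and the second. These two are proved by the same argument, so I would spell out only the first.

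For the first identity, isotope the strand attached near the top of the left-hand diagram so that it meets the trivalent vertex in a single transverse crossing with one of its three legs, and apply the skein relation at that crossing. This produces two diagrams. In the ``$\mathrm{II}$''-term the strand and the leg are pulled apart, and a planar isotopy carries the result to (a scalar multiple of) the right-hand side. In the ``$\mathrm{H}$''-term a short edge bridges a new trivalent vertex on the leg to a new trivalent vertex on the strand; tracing the strand shows that this bridge, together with an edge of the original vertex, bounds an internal bigon, which (R2) collapses at the cost of a factor $-2$, and, if that collapse closes off a loop, (R1) contributes a further factor $3$. The coefficients of the two surviving terms then sum to $1$, which is the asserted equality. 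One should fix a single reduced word for the permutation involved at the outset, so that no appeal to Matsumoto's theorem is needed.

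The only delicate point is bookkeeping, and this is where I would be most careful rather than where I expect a genuine difficulty: one must check that every trivalent vertex created by a skein resolution inherits a globally consistent ``all-in'' or ``all-out'' orientation (so that it is a legal vertex), and one must track the scalars coming from (R1)--(R2) against the sign $-1$ that appears whenever two legs of a trivalent vertex are transposed, the $\mathfrak{sl}_3$ trivalent vertex being totally antisymmetric. As a cross-check, and as an alternative route, all four identities are standard isotopies in the $\mathfrak{gl}_3$-web category of \cite{CKM14}, and they can also be verified in a single line on the level of $\mathfrak{sl}_3$-invariants by identifying a trivalent vertex with the (anti)symmetrizer onto $\Lambda^3 V\cong\CC$; we suppress these verifications.
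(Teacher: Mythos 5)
The paper itself offers no proof of this lemma beyond the remark that the identities are well known and can be checked directly from the skein relation, so you are being asked to supply that check; your high-level plan (use arrow-reversal symmetry to halve the work, then resolve crossings and simplify with (R1)--(R3), with the representation-theoretic argument as a fallback) is the right one. The execution, however, does not describe a computation that can actually be carried out. The left-hand diagram cannot be isotoped so that the transverse strand meets the vertex ``in a single transverse crossing with one of its three legs'': the strand's endpoints sit between the two lower legs and beyond the upper leg, so any arc joining them on the left of the vertex must cross two legs, and reducing two crossings to one is precisely the content of the identity being proved. The honest skein computation resolves both crossings on the left-hand side (four planar terms) and the single crossing on the right-hand side (two planar terms) and matches the two resulting linear combinations using (R1)--(R3); the mechanism you describe instead does not occur here. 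A single skein resolution in this picture creates a new trivalent vertex on a leg joined to the original vertex by only \emph{one} edge, so no internal bigon appears and (R2) with its factor $-2$ is not triggered; no closed loop ever forms, so (R1) and the factor $3$ are irrelevant; and the claim that ``the coefficients of the two surviving terms sum to $1$'' has no referent, since each side of the identity is a single diagram with coefficient $1$. The aside about fixing a reduced word to avoid Matsumoto's theorem is also out of place: no permutation with multiple reduced expressions enters this purely local statement. Finally, the first and second identities involve crossings with different orientation patterns on the two strands (the skein relation is drawn only for a crossing of two strands oriented the same way), so they are not literally ``the same argument'' and the oppositely oriented crossing needs its own (derived) resolution.

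Ironically, the one complete argument in your write-up is the one you suppress at the end: under the identification of webs with $\mathfrak{sl}_3$-morphisms, the crossing defined by the skein relation represents the flip $v\otimes w\mapsto w\otimes v$, the trivalent vertex represents an equivariant map, and the displayed identities are exactly the naturality of the flip with respect to that map. I would either promote that to the main proof, or write out the four-term versus two-term skein resolution in full rather than gesturing at a bigon collapse that does not occur.
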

\begin{lemma}\label{lem:R12}
The following relations,  famously known as the Reidemeister I and Reidemeister II relations,  also hold.
\begin{align*}
\begin{tikzpicture}[scale=0.3,baseline=0]
		\node [] (0) at (-2, -1) {};
		\node [] (1) at (2, -1) {};
		\node [] (2) at (0, 2) {};
		\node [] (3) at (0, 0) {};
		\node [] (4) at (1, 1) {};
		\node [] (5) at (-1, 1) {};
		\draw [thick,directed] (0.center) to (3.center);
		\draw [thick,out=30,  in=270] (3.center) to (4.center);
		\draw [thick,directed,  in=0, out=90] (4.center) to (2.center);
		\draw [thick, in=90, out=180] (2.center) to (5.center);
		\draw [thick, in=150, out=270] (5.center) to (3.center);
		\draw [thick,directed] (3.center) to (1.center);
\end{tikzpicture}
=
\begin{tikzpicture}[scale=0.3,baseline=0]
		\node [] (0) at (-2, 0) {};
		\node [] (1) at (2, 0) {};
		\draw [thick,directed] (0.center) to (1.center);
\end{tikzpicture},   \hspace{.4 in}
\begin{tikzpicture}[xscale=-0.3,  yscale=0.3,  baseline=0]
		\node [] (0) at (-2, -1) {};
		\node [] (1) at (2, -1) {};
		\node [] (2) at (0, 2) {};
		\node [] (3) at (0, 0) {};
		\node [] (4) at (1, 1) {};
		\node [] (5) at (-1, 1) {};
		\draw [thick,directed] (0.center) to (3.center);
		\draw [thick,out=30,  in=270] (3.center) to (4.center);
		\draw [thick,directed,  in=0, out=90] (4.center) to (2.center);
		\draw [thick, in=90, out=180] (2.center) to (5.center);
		\draw [ thick,in=150, out=270] (5.center) to (3.center);
		\draw [thick,directed] (3.center) to (1.center);
\end{tikzpicture}
=
\begin{tikzpicture}[xscale=-0.3,  yscale=0.3,   baseline=0]
		\node [] (0) at (-2, 0) {};
		\node [] (1) at (2, 0) {};
		\draw [thick,directed] (0.center) to (1.center);
\end{tikzpicture},   \\
\begin{tikzpicture} [scale=0.3,  baseline=0]
		\node [] (0) at (-1, 2) {};
		\node [] (1) at (1, 0) {};
		\node [] (2) at (-1, -2) {};
		\node [] (3) at (1, 2) {};
		\node [] (4) at (-1, 0) {};
		\node [] (5) at (1, -2) {};
		\draw [thick,directed,  out=210,  in=90] (3.center) to (4.center);
		\draw [thick,out=270,  in=150] (4.center) to (5.center);
		\draw [thick,directed,  out=330,  in=90] (0.center) to (1.center);
		\draw [thick,out=270,  in=30](1.center) to (2.center);
\end{tikzpicture}=\begin{tikzpicture} [scale=0.3,  baseline=0]
		\node [] (0) at (-1, 2) {};
		\node [] (1) at (1, 0) {};
		\node [] (2) at (-1, -2) {};
		\node [] (3) at (1, 2) {};
		\node [] (4) at (-1, 0) {};
		\node [] (5) at (1, -2) {};
		\draw [thick,directed] (3.center) to (5.center);
		\draw [thick,directed] (0.center) to (2.center);
\end{tikzpicture} ,\hspace{.5 in}
\begin{tikzpicture} [scale=0.3,  baseline=0]
		\node [] (0) at (-1, 2) {};
		\node [] (1) at (1, 0) {};
		\node [] (2) at (-1, -2) {};
		\node [] (3) at (1, 2) {};
		\node [] (4) at (-1, 0) {};
		\node [] (5) at (1, -2) {};
		\draw [thick,reverse directed,  out=210,  in=90] (3.center) to (4.center);
		\draw [thick,out=270,  in=150] (4.center) to (5.center);
		\draw [thick,directed,  out=330,  in=90] (0.center) to (1.center);
		\draw [thick,out=270,  in=30](1.center) to (2.center);
\end{tikzpicture}=
\begin{tikzpicture} [scale=0.3,  baseline=0]
		\node [] (0) at (-1, 2) {};
		\node [] (1) at (1, 0) {};
		\node [] (2) at (-1, -2) {};
		\node [] (3) at (1, 2) {};
		\node [] (4) at (-1, 0) {};
		\node [] (5) at (1, -2) {};
		\draw [thick,reverse directed] (3.center) to (5.center);
		\draw [thick,directed] (0.center) to (2.center);
\end{tikzpicture},  \hspace{.5 in}
\begin{tikzpicture} [scale=0.3,  baseline=0]
		\node [] (0) at (-1, 2) {};
		\node [] (1) at (1, 0) {};
		\node [] (2) at (-1, -2) {};
		\node [] (3) at (1, 2) {};
		\node [] (4) at (-1, 0) {};
		\node [] (5) at (1, -2) {};
		\draw [thick,directed,  out=210,  in=90] (3.center) to (4.center);
		\draw [thick,out=270,  in=150] (4.center) to (5.center);
		\draw [thick,reverse directed,  out=330,  in=90] (0.center) to (1.center);
		\draw [thick,out=270,  in=30](1.center) to (2.center);
\end{tikzpicture}=\begin{tikzpicture} [scale=0.3,  baseline=0]
		\node [] (0) at (-1, 2) {};
		\node [] (1) at (1, 0) {};
		\node [] (2) at (-1, -2) {};
		\node [] (3) at (1, 2) {};
		\node [] (4) at (-1, 0) {};
		\node [] (5) at (1, -2) {};
		\draw [thick,directed] (3.center) to (5.center);
		\draw [thick,reverse directed] (0.center) to (2.center);
\end{tikzpicture}.
\end{align*}
\end{lemma}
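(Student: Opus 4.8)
The plan is to reduce both statements to short diagrammatic computations by first eliminating all crossings with the skein relation. That relation rewrites a single crossing as the sum of its two smoothings: the \emph{bridge} smoothing $B$ (the resolution passing through a pair of trivalent vertices joined by an edge) and the \emph{identity} smoothing $I$ (the two non-crossing strands), so that symbolically $\sigma = B + I$. After every crossing is expanded this way, each resulting summand is a genuine oriented web, to which the local relations (R1), (R2), (R3) and the all-in/all-out orientation convention at trivalent vertices apply. The lemma then splits into one computation for Reidemeister II and one for Reidemeister I.

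For the Reidemeister II relations I would stack the two crossings and expand both, obtaining $B^2 + BI + IB + I^2$. The term $I^2 = I$ is precisely the pair of parallel strands on the right-hand side, and each mixed term $BI$ and $IB$ collapses to a single bridge $B$. In $B^2$ the two bridges are stacked so that their middle edges bound a digon; applying the digon relation (R2) replaces it by $-2$ times a single bridge, so $B^2 = -2B$. Hence $B^2 + BI + IB = (-2+1+1)B = 0$, and only $I$ survives, giving the uncrossed strands. The three oriented variants in the statement are handled by the identical computation once one substitutes the correspondingly oriented forms of the trivalent-vertex relations recorded in the preceding lemma.

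For the Reidemeister I relations the curl contains a single self-crossing, which I again resolve as $\sigma = B + I$. The identity smoothing $I$ straightens the curl into the plain strand on the right-hand side with coefficient one. In the bridge smoothing $B$, the two legs of the curled arc attach to the \emph{same} newly created trivalent vertex, so that arc becomes a loop based at that vertex, i.e.\ a tadpole. Since every trivalent vertex is oriented all-in or all-out, its two looped legs cannot be joined into a consistently oriented cycle; the tadpole is therefore not an admissible oriented web and that term vanishes. Thus the curl equals the strand, and the mirror-image curl is treated the same way.

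The main obstacle here is bookkeeping rather than conceptual content: one must verify, in each smoothing and for each of the displayed orientations, that the induced edge directions are the admissible all-in/all-out ones at every trivalent vertex, so that the vanishing of the tadpole term (for Reidemeister I) and the applicability of the digon relation (R2) (for Reidemeister II) are actually licensed. This case check is exactly where the oriented list of relations in the preceding lemma is invoked, and it is routine to carry out orientation by orientation.
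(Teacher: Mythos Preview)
Your overall approach matches the paper's intent: resolve each crossing via the skein relation and simplify with (R1)--(R3). The Reidemeister~II computation $B^2+BI+IB+I^2=(-2+1+1)B+I=I$ is correct.

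The Reidemeister~I argument, however, misidentifies the two smoothings. At the self-crossing of the curl the four local ends split into two incoming (say SW and NW) and two outgoing (NE and SE); to apply the paper's skein relation --- stated for two co-oriented strands --- one must match the incoming ends to the bottom and the outgoing ends to the top. With that matching the identity smoothing $I$ pairs SW--SE and NW--NE: the first is the desired through-strand, but the second closes the loop into a disjoint oriented circle, so by (R1) the $I$-term contributes $3\cdot(\text{strand})$, not $1\cdot(\text{strand})$. In the bridge smoothing $B$, one trivalent vertex sits on the $\{\text{NE},\text{SE}\}$ side and the other on the $\{\text{SW},\text{NW}\}$ side; the curl's arc, which joins NE to NW externally, then connects these two \emph{different} vertices by a second edge parallel to the middle one, producing a digon rather than a tadpole. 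By (R2) this contributes $-2\cdot(\text{strand})$, and $3-2=1$ yields the relation. The tadpole you describe would require a single trivalent vertex to meet both NW and NE, but those carry opposite in/out types; the orientation constraint therefore prevents that configuration from ever being produced by the oriented skein relation --- it is not a term that appears and then vanishes.
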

We remark that two arcs in a fork diagrams cross at most once after one applies the Reidemeister II moves.

Given two arcs $a=(a_1,b_1,c_1)$ and $b=(a_2,b_2,c_2)$ in $m$,  we say they satisfy the partial-non-crossing property if both of the following holds.
\begin{itemize}
    \item Either $a_1<a_2<b_2<b_1$, or $a_2<a_1<b_1<b_2$.
    \item Either $b_1<b_2<c_2<c_1$, or $b_2<b_1<c_1<c_2$.
\end{itemize}
In short,  left arcs in $a$ and $b$ do not intersect,  and neither do their right arcs.   A fork diagram $m$ is called an \emph{M-diagram} if any pair of arcs in $m$ satisfy the partial-non-crossing property.  Let $\mathcal{M}^n$ be the set of M-diagrams with $3n$ boundary points.

 We now illustrate the bijection $\phi: \mathcal{T}^n\simeq \mathcal{P}^n $ mentioned earlier.   Let $T\in \mathcal{T}^n$,  and $\phi(T)$ be the fork diagram,  so that whenever two  (labeled) endpoints lie in the same column in $T$,  they are connected to the same internal vertex in $\phi(T)$. 
\begin{lemma}
$\phi(T)$ is a polytabloid diagram.
\end{lemma}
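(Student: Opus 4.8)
The plan is to unwind the definitions and then read off the required order conditions directly from the fact that $T$ is standard, so that essentially no computation is needed. Write $T_{i,j}$ for the entry of $T$ in row $i$ and column $j$. Recall from the text that a fork diagram $W_0.\sigma$ is a polytabloid diagram exactly when $\sigma\in\mathcal{D}^{(3^n)}\cap\mathcal{D}^{(n^3)}$, and that this amounts to two order conditions on the arcs of $W_0.\sigma$, listed in the order inherited from the blocks $\{3j-2,3j-1,3j\}$ of $W_0$: first, within each arc the three boundary points occur in increasing order (equivalently $W_0.\sigma$ is a fork diagram, i.e.\ $\sigma\in\mathcal{D}^{(3^n)}$); second, the sequence of left endpoints over the $n$ arcs is increasing, and likewise the sequences of middle endpoints and of right endpoints (equivalently $\sigma\in\mathcal{D}^{(n^3)}$).

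First I would identify the arcs of $\phi(T)$. By the defining property of $\phi$, together with the fact established by the preceding lemmas that a fork diagram is determined by its partition of $\{1,\dots,3n\}$ into $3$-element subsets, $\phi(T)$ is the unique fork diagram whose $n$ arcs are the $n$ columns of $T$; matching the $j$-th block of $W_0$ with the $j$-th column of $T$ exhibits $\phi(T)$ as some $W_0.\sigma$. Since $T$ is standard its entries strictly increase down each column, so the $j$-th column of $T$ is the sorted triple $(T_{1,j},T_{2,j},T_{3,j})$, and hence the $j$-th arc of $\phi(T)$ has left endpoint $T_{1,j}$, middle endpoint $T_{2,j}$, and right endpoint $T_{3,j}$. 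In particular the first order condition holds, so $\phi(T)$ is indeed a fork diagram.

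It then remains to check the second condition. Reading $j$ from $1$ to $n$, the left endpoints of the arcs of $\phi(T)$ form the sequence $T_{1,1},T_{1,2},\dots,T_{1,n}$, i.e.\ the first row of $T$, which is strictly increasing because $T$ is standard; the same applies to the middle endpoints $T_{2,1},\dots,T_{2,n}$ (the second row of $T$) and to the right endpoints $T_{3,1},\dots,T_{3,n}$ (the third row). Hence $\phi(T)=W_0.\sigma$ with $\sigma\in\mathcal{D}^{(3^n)}\cap\mathcal{D}^{(n^3)}$, that is, $\phi(T)\in\mathcal{P}^n$. I expect the only delicate part to be the bookkeeping in the first paragraph: one must confirm that the plain-language ``relative order'' conditions faithfully translate the minimal-coset-representative conditions defining $\mathcal{D}^{(3^n)}$ and $\mathcal{D}^{(n^3)}$, and that the arcs of $W_0.\sigma$ are to be ordered as inherited from the blocks of $W_0$; once that is pinned down, the row- and column-strictness of $T$ gives the statement immediately.
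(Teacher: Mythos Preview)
Your proposal is correct and follows essentially the same approach as the paper: you use the column-strictness of $T$ to verify the relative order within each arc (giving $\sigma\in\mathcal{D}^{(3^n)}$) and the row-strictness of $T$ to verify the relative order among all left, middle, and right endpoints (giving $\sigma\in\mathcal{D}^{(n^3)}$). The paper's proof is just a two-sentence compression of the same argument.
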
 
 \begin{proof}
This is because the relative order is preserved within  the three (left,  middle,  right) endpoints of each arc,  because of the standard condition on each column  in $T$.  Similarly,  the relative order is preserved within all left endpoints,  because of the standard condition on each row in $T$. 
\end{proof}
It follows that $\phi(T)=W_0.\sigma$ with $\sigma \in D_{(3^n)}\cap D_{(n^3)}$,  and $\phi$ is a bijection.  We transfer the previously defined partial order from $\mathcal{T}^n$ to  $\mathcal{P}^n$ so that $\phi$ is a bijection of partially ordered set.   A three-row analogue of \cite[Proposition~2.5]{IZ19} works verbatim  and leads to the following result.

\begin{theorem}
The map  $v_T \mapsto \phi(T)$ defines a module isomorphism $\mathcal{D}^{(n^3)}\simeq \mathcal{W}^{(n^3)}$.
\end{theorem}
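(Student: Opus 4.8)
The plan is to identify the linear extension of $v_T\mapsto\phi(T)$ with the restriction, to the polytabloid basis, of the module isomorphism $\Phi\colon S^{(n^3)}\xrightarrow{\ \sim\ }D^{(n^3)}$ furnished by the cited theorem, namely the one with $\Phi(v_{T_0})=W_0$. Since $S^{(n^3)}$ is generated by $v_{T_0}$ and every standard polytabloid is $v_T=v_{T_0}.w_T$, where $w_T$ is the unique element of $S_{3n}$ with $T_0.w_T=T$, we get $\Phi(v_T)=W_0.w_T$. The whole statement therefore reduces to the single combinatorial identity
\[
W_0.w_T=\phi(T)\qquad(T\in\mathcal{T}^n).
\]
Once this is granted, $\Phi$, being an isomorphism, carries the basis $\{v_T\}_{T\in\mathcal{T}^n}$ bijectively onto $\{\phi(T)\}_{T\in\mathcal{T}^n}=\mathcal{P}^n$; hence $v_T\mapsto\phi(T)$ extends to a module isomorphism, $\mathcal{P}^n$ is a basis of $D^{(n^3)}$, and, as the partial order on $\mathcal{P}^n$ was defined by transporting the one on $\mathcal{T}^n$ along $\phi$, this isomorphism is compatible with the orders.

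To prove $W_0.w_T=\phi(T)$ I would proceed as follows. The right $S_{3n}$-action on $D^{(n^3)}$ and the right $S_{3n}$-action on $\mathcal{T}^n$ correspond under $\Phi$, since both are \emph{the} action making $v_{T_0}\leftrightarrow W_0$ equivariant; under the tableau action the entries filling the columns of $T=T_0.w_T$ are the images under $w_T$ of the column-triples of $T_0$, while under the web action $W_0.w_T$ is a fork diagram whose arcs are those same images of the arcs $\{1,2,3\},\{4,5,6\},\dots$ of $W_0$. Thus $W_0.w_T$ is a fork diagram whose arcs are exactly the column-triples of $T$, which is how $\phi(T)$ is defined; since a fork diagram is determined by its arcs (the lemmas recorded above), $W_0.w_T=\phi(T)$. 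The one point needing care is that for standard $T$ the permutation $w_T$ lies in $\mathcal{D}^{(3^n)}$, so that $W_0.w_T$ is genuinely a fork diagram (no triple collapses) — this is the bijection $\mathcal{T}^n\simeq\mathcal{P}^n$ already observed, and it is here that the two-row argument of \cite[Proposition~2.5]{IZ19} is transcribed line for line, the only new feature being that each arc now has a left, a middle and a right endpoint, all of whose relative orders must be tracked simultaneously.

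The main work is therefore not the abstract isomorphism, which is free from the cited theorem, but this faithful combinatorial translation: one must check that the lemmas on fork diagrams apply to every intermediate picture obtained while stacking a reduced word for $w_T$ beneath $W_0$, i.e.\ that no stacked crossing ever triggers an (R1) or (R2) reduction lowering the number of trivalent vertices, and that the resulting partition of $\{1,\dots,3n\}$ into triples is precisely the column partition of $T$. A proof not routed through $\Phi$ is also available and runs on the same combinatorics: verify directly that $v_T\mapsto\phi(T)$ is $S_{3n}$-equivariant, which it suffices to do on a simple transposition $s_i$ by expanding $v_T.s_i$ in the standard polytabloid basis via the Garnir straightening relations and matching it, term by term, with the skein expansion of $\phi(T).s_i$ — the Garnir straightening on the module side mirroring relation (R3) on the diagram side — and then conclude using the irreducibility of $S^{(n^3)}$ together with $\dim S^{(n^3)}=\dim D^{(n^3)}=|\mathcal{T}^n|$. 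Both implementations mirror \cite[Proposition~2.5]{IZ19}.
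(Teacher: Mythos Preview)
Your proposal is correct and essentially the same as the paper's: the paper's entire proof is the single sentence ``A three-row analogue of \cite[Proposition~2.5]{IZ19} works verbatim,'' and what you outline is precisely that three-row analogue, made explicit by routing through the already-cited isomorphism $\Phi(v_{T_0})=W_0$ and then checking $\Phi(v_T)=W_0.w_T=\phi(T)$ via the lemmas that a fork diagram is determined by its arc partition. Your alternative direct verification of equivariance (Garnir relations on the polytabloid side matching the skein relation on the web side) is also faithful to the argument in \cite{IZ19} that the paper invokes.
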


We also recall a bijection between $\mathcal{T}^n$ and $\mathcal{M}^n$ defined in \cite{Ty12}.  Let $T\in \mathcal{T}^n$.  We first label the points on the horizontal line by integers $1,2,\dots, 3n$.  Next,  if $i$ is in the first row of $T$,  we mark it with $+$;  if it is in the middle row of $T$,  we mark it with $0$; if it is in the last row of $T$ we mark it with $-$.  Now we look at all vertices marked with $+$ and $0$ and draw non-intersecting arcs between them,  so that any point marked with $+$ corresponds to a left endpoint,  and any point marked with $0$ corresponds to a middle endpoint.  It is well-known that such a diagram is unique.  Next,  we ignore the arcs currently drawn and look at all endpoints marked with $0$ and $-$.  We now draw non-intersecting arcs between them,  so that $0$'s correspond to a middle endpoints and $-$'s correspond to a right endpoints.  We then combine the arcs drawn at the two stages,  and modify the boundary points labelled with $0$ by the following procedure to obtain $\psi(T)\in\mathcal{M}^n$:
\begin{align*}
\begin{tikzpicture}[baseline=0,  scale=.5]
		\node  (0) at (0, -1) {};
		\node  (1) at (-1, 1) {};
		\node  (2) at (1, 1) {};
		\draw [thick,  in=0,  out = 90] (0.center) to (1.center);
		\draw [thick,  in=180,  out = 90]  (0.center) to (2.center);
\end{tikzpicture} \longrightarrow
\begin{tikzpicture}[baseline=0,  scale=.5]
		\node  (0) at (-1, 1) {};
		\node  (1) at (0, 0) {};
		\node  (2) at (0, -1) {};
		\node  (3) at (1, 1) {};
		\draw [thick,  directed] (1.center) to (2.center);
		\draw [thick,  directed](1.center) to (0.center);
		\draw [thick,  directed](1.center) to (3.center);
\end{tikzpicture}.
\end{align*}
\begin{remark}
Compared to the M-diagrams in \cite{Ty12},  we leave the crossings in the M-diagrams intact (rather than replacing them by the top summand in the skein relation).
\end{remark}

We transfer the partial order from $\mathcal{T}^n$ to  $\mathcal{M}^n$ in a way that $\psi$ is a bijection of partially ordered set.   We denote this partial order on $\mathcal{M}^n$ by $\leq$.  This order is called the shadow containment order and is extensively studied in detail in \cite{RT22}.  However,  we will not be using the combinatorial description used on $\mathcal{M}^n$,  as introduced in  \cite{RT22},   and will only refer to the definition via $\mathcal{T}^n$.


\ytableausetup{smalltableaux}
\begin{example}\label{mainexample}
For $n=2$,  we list all standard Young tableaux of shape $(2^3)$,   as well as  the corresponding polytabloid diagrams and M-diagrams,  with $\phi(T_i)=v_i$ and $\psi(T_i)=m_i$.
\begin{center}
\begin{tabular}{|c|c|c|c|c|}
\hline
    $T_0$ & $T_1$ &$T_2$&$T_3$&$T_4$ \\
   \begin{ytableau}
   1 & 4 \\ 2 & 5 \\ 3 & 6
   \end{ytableau} &
   \begin{ytableau}
   1 & 3 \\ 2 & 5 \\ 4 & 6
   \end{ytableau}
   &
    \begin{ytableau}
   1 & 2 \\ 3 & 5 \\ 4 & 6
   \end{ytableau}
   &
    \begin{ytableau}
   1 & 3 \\ 2 & 4 \\ 5 & 6
   \end{ytableau}
   &
   \begin{ytableau}
   1 & 2 \\ 3 & 4 \\ 5 & 6
   \end{ytableau} \\
   &&&&\\
   \hline
    $v_0$ & $v_1$ &$v_2$&$v_3$&$v_4$ \\
    \begin{tikzpicture}[scale=0.2]
		\node [] (0) at (-1, -1) {};
		\node [] (1) at (-3, 0) {};
		\node [] (2) at (-5, -1) {};
		\node [] (3) at (1, -1) {};
		\node [] (4) at (3, 0) {};
		\node [] (5) at (5, -1) {};
		\node [] (6) at (-4, 1) {};
		\node [] (7) at (-2, 1) {};
		\node [] (8) at (2, 1) {};
		\node [] (9) at (4, 1) {};
		\node [] (10) at (-3, -1) {};
		\node [] (11) at (3, -1) {};
		\draw[out=180, in=80, directed] (6.center) to (2.center);
		\draw[out=0, in=120] (6.center) to (1.center);
		\draw[out=60, in=180] (1.center) to (7.center);
		\draw[out=0, in=100, directed] (7.center) to (0.center);
		\draw[directed] (1.center) to (10.center);
		\draw[out=180, in=80, directed] (8.center) to (3.center);
		\draw[out=0, in=120] (8.center) to (4.center);
		\draw[out=60, in=180] (4.center) to (9.center);
		\draw[out=0, in=100, directed] (9.center) to (5.center);
		\draw[directed] (4.center) to (11.center);
\end{tikzpicture}
    &
     \begin{tikzpicture}[scale=0.2]
		\node [] (0) at (-1, -1) {};
		\node [] (1) at (-3, 0) {};
		\node [] (2) at (-5, -1) {};
		\node [] (3) at (1, -1) {};
		\node [] (4) at (3, 0) {};
		\node [] (5) at (5, -1) {};
		\node [] (6) at (-4, 1) {};
		\node [] (7) at (-2, 1) {};
		\node [] (8) at (2, 1) {};
		\node [] (9) at (4, 1) {};
		\node [] (10) at (-3, -1) {};
		\node [] (11) at (3, -1) {};
		\draw[out=180, in=80, directed] (6.center) to (2.center);
		\draw[out=0, in=120] (6.center) to (1.center);
		\draw[out=60, in=180] (1.center) to (7.center);
		\draw[out=0, in=120, directed] (7.center) to (3.center);
		\draw[directed] (1.center) to (10.center);
		\draw[in=180, out=60, reverse directed] (0.center) to (8.center);
		\draw[out=0, in=120] (8.center) to (4.center);
		\draw[out=60, in=180] (4.center) to (9.center);
		\draw[out=0, in=100, directed] (9.center) to (5.center);
		\draw[directed] (4.center) to (11.center);
\end{tikzpicture}
    &
    \begin{tikzpicture}[scale=0.2]
		\node [] (0) at (-1, -1) {};
		\node [] (1) at (-3, -1) {};
		\node [] (2) at (-5, -1) {};
		\node [] (3) at (1, -1) {};
		\node [] (4) at (3, -1) {};
		\node [] (5) at (5, -1) {};
		\node [] (6) at (0, 0.2) {};
		\node [] (7) at (4, 0.2) {};
		\node [] (8) at (-3, 1) {};
		\node [] (9) at (1, 1) {};
		\draw[directed] (6.center) to (0.center);
		\draw[directed] (6.center) to (3.center);
		\draw[directed] (7.center) to (4.center);
		\draw[directed] (7.center) to (5.center);
		\draw[directed, out=180, in=80] (8.center) to (2.center);
		\draw[out=0, in=120] (8.center) to (6.center);
		\draw[directed, out=180, in=80] (9.center) to (1.center);
		\draw[out=0, in=120] (9.center) to (7.center);
\end{tikzpicture}
    &
    \begin{tikzpicture}[yscale=0.2, xscale=-0.2]
		\node [] (0) at (-1, -1) {};
		\node [] (1) at (-3, -1) {};
		\node [] (2) at (-5, -1) {};
		\node [] (3) at (1, -1) {};
		\node [] (4) at (3, -1) {};
		\node [] (5) at (5, -1) {};
		\node [] (6) at (0, 0.2) {};
		\node [] (7) at (4, 0.2) {};
		\node [] (8) at (-3, 1) {};
		\node [] (9) at (1, 1) {};
		\draw[directed] (6.center) to (0.center);
		\draw[directed] (6.center) to (3.center);
		\draw[directed] (7.center) to (4.center);
		\draw[directed] (7.center) to (5.center);
		\draw[directed, out=180, in=80] (8.center) to (2.center);
		\draw[out=0, in=120] (8.center) to (6.center);
		\draw[directed, out=180, in=80] (9.center) to (1.center);
		\draw[out=0, in=120] (9.center) to (7.center);
\end{tikzpicture}
    &
    \begin{tikzpicture}[scale=0.2]
		\node [] (0) at (-1, -1) {};
		\node [] (1) at (-3, -1) {};
		\node [] (2) at (-5, -1) {};
		\node [] (3) at (1, -1) {};
		\node [] (4) at (3, -1) {};
		\node [] (5) at (5, -1) {};
		\node [] (6) at (2, 0) {};
		\node [] (7) at (-3, 1) {};
		\node [] (8) at (-2, 0) {};
		\node [] (9) at (3, 1) {};
		\draw[directed] (6.center) to (0.center);
		\draw[directed] (6.center) to (4.center);
		\draw[out=180, in=80, directed] (7.center) to (2.center);
		\draw[out=0, in=130] (7.center) to (6.center);
		\draw[directed] (8.center) to (3.center);
		\draw[directed] (8.center) to (1.center);
		\draw[out=180,in=50] (9.center) to (8.center);
		\draw[directed, out=0, in=80] (9.center) to (5.center);
\end{tikzpicture}
    \\
    \hline
    $m_0$ & $m_1$ & $m_2$ & $m_3$ & $m_4$ \\
    \begin{tikzpicture}[scale=0.2]
		\node [] (0) at (-1, -1) {};
		\node [] (1) at (-3, 0) {};
		\node [] (2) at (-5, -1) {};
		\node [] (3) at (1, -1) {};
		\node [] (4) at (3, 0) {};
		\node [] (5) at (5, -1) {};
		\node [] (6) at (-4, 1) {};
		\node [] (7) at (-2, 1) {};
		\node [] (8) at (2, 1) {};
		\node [] (9) at (4, 1) {};
		\node [] (10) at (-3, -1) {};
		\node [] (11) at (3, -1) {};
		\draw[out=180, in=80, directed] (6.center) to (2.center);
		\draw[out=0, in=120] (6.center) to (1.center);
		\draw[out=60, in=180] (1.center) to (7.center);
		\draw[out=0, in=100, directed] (7.center) to (0.center);
		\draw[directed] (1.center) to (10.center);
		\draw[out=180, in=80, directed] (8.center) to (3.center);
		\draw[out=0, in=120] (8.center) to (4.center);
		\draw[out=60, in=180] (4.center) to (9.center);
		\draw[out=0, in=100, directed] (9.center) to (5.center);
		\draw[directed] (4.center) to (11.center);
\end{tikzpicture}
    &
    \begin{tikzpicture}[scale=0.2]
		\node [] (0) at (-1, -1) {};
		\node [] (1) at (-3, 0) {};
		\node [] (2) at (-5, -1) {};
		\node [] (3) at (1, -1) {};
		\node [] (4) at (3, 0) {};
		\node [] (5) at (5, -1) {};
		\node [] (6) at (-4, 1) {};
		\node [] (7) at (-2, 1) {};
		\node [] (8) at (2, 1) {};
		\node [] (9) at (4, 1) {};
		\node [] (10) at (-3, -1) {};
		\node [] (11) at (3, -1) {};
		\draw[out=180, in=80, directed] (6.center) to (2.center);
		\draw[out=0, in=120] (6.center) to (1.center);
		\draw[out=60, in=180] (1.center) to (7.center);
		\draw[out=0, in=120, directed] (7.center) to (3.center);
		\draw[directed] (1.center) to (10.center);
		\draw[in=180, out=60, reverse directed] (0.center) to (8.center);
		\draw[out=0, in=120] (8.center) to (4.center);
		\draw[out=60, in=180] (4.center) to (9.center);
		\draw[out=0, in=100, directed] (9.center) to (5.center);
		\draw[directed] (4.center) to (11.center);
\end{tikzpicture}
    &
    \begin{tikzpicture}[scale=0.2]
		\node [] (0) at (-1, -1) {};
		\node [] (1) at (-3, -1) {};
		\node [] (2) at (-5, -1) {};
		\node [] (3) at (1, -1) {};
		\node [] (4) at (3, -1) {};
		\node [] (5) at (5, -1) {};
		\node [] (6) at (-1, 0) {};
		\node [] (7) at (2, 1) {};
		\node [] (8) at (-3, 0.5) {};
		\draw[directed, out=0, in=90] (7.center) to (5.center);
		\draw[directed] (7.center) to (4.center);
		\draw[directed] (6.center) to (0.center);
		\draw[directed, out=180, in=90] (6.center) to (1.center);
		\draw[directed, out=0, in=90] (6.center) to (3.center);
		\draw[directed, out=180, in=90] (7.center) to (2.center);
\end{tikzpicture}
    &
    \begin{tikzpicture}[yscale=0.2,xscale=-0.2]
		\node  (0) at (-1, -1) {};
		\node  (1) at (-3, -1) {};
		\node  (2) at (-5, -1) {};
		\node  (3) at (1, -1) {};
		\node  (4) at (3, -1) {};
		\node  (5) at (5, -1) {};
		\node  (6) at (-1, 0) {};
		\node  (7) at (2, 1) {};
		\node  (8) at (-3, 0.5) {};
		\draw[directed, out=0, in=90] (7.center) to (5.center);
		\draw[directed] (7.center) to (4.center);
		\draw[directed] (6.center) to (0.center);
		\draw[directed, out=180, in=90] (6.center) to (1.center);
		\draw[directed, out=0, in=90] (6.center) to (3.center);
		\draw[directed, out=180, in=90] (7.center) to (2.center);
\end{tikzpicture}
     &
     \begin{tikzpicture}[scale=0.2]
		\node [] (0) at (-1, -1) {};
		\node [] (1) at (-3, -1) {};
		\node [] (2) at (-5, -1) {};
		\node [] (3) at (1, -1) {};
		\node [] (4) at (3, -1) {};
		\node [] (5) at (5, -1) {};
		\node [] (6) at (2, 0) {};
		\node [] (7) at (-3, 1) {};
		\node [] (8) at (-2, 0) {};
		\node [] (9) at (3, 1) {};
		\draw[directed] (6.center) to (3.center);
		\draw[directed] (6.center) to (4.center);
		\draw[out=180, in=80, directed] (7.center) to (2.center);
		\draw[out=0, in=130] (7.center) to (6.center);
		\draw[directed] (8.center) to (0.center);
		\draw[directed] (8.center) to (1.center);
		\draw[out=180,in=50] (9.center) to (8.center);
		\draw[directed, out=0, in=80] (9.center) to (5.center);
\end{tikzpicture}
     \\
    \hline
\end{tabular}
\end{center}
Note that $v_0=m_0$ and $v_1=m_1$. We include the following two additional diagrams, and therefore exhaust all possibilities for fork diagrams with $6$ bottom boundary points:
\begin{align*}
    u_2=s_4.m_2=
    \begin{tikzpicture}[scale=0.2,baseline=0]
		\node [] (0) at (-1, -1) {};
		\node [] (1) at (-3, -1) {};
		\node [] (2) at (-5, -1) {};
		\node [] (3) at (1, -1) {};
		\node [] (4) at (3, -1) {};
		\node [] (5) at (5, -1) {};
		\node [] (6) at (-1, 0) {};
		\node [] (7) at (2, 1) {};
		\node [] (8) at (-3, 0.5) {};
		\draw[directed, out=0, in=90] (7.center) to (5.center);
		\draw[reverse directed] (3.center) to (7.center);
		\draw[directed] (6.center) to (0.center);
		\draw[directed, out=180, in=90] (6.center) to (1.center);
		\draw[reverse directed, in=0, out=90] (4.center) to (6.center);
		\draw[directed, out=180, in=90] (7.center) to (2.center);
\end{tikzpicture}
    , \hspace{0.5 in} u_3=s_2.m_3=
    \begin{tikzpicture}[yscale=0.2,xscale=-0.2,baseline=0]
		\node [] (0) at (-1, -1) {};
		\node [] (1) at (-3, -1) {};
		\node [] (2) at (-5, -1) {};
		\node [] (3) at (1, -1) {};
		\node [] (4) at (3, -1) {};
		\node [] (5) at (5, -1) {};
		\node [] (6) at (-1, 0) {};
		\node [] (7) at (2, 1) {};
		\node [] (8) at (-3, 0.5) {};
		\draw[directed, out=0, in=90] (7.center) to (5.center);
		\draw[reverse directed] (3.center) to (7.center);
		\draw[directed] (6.center) to (0.center);
		\draw[directed, out=180, in=90] (6.center) to (1.center);
		\draw[reverse directed, in=0, out=90] (4.center) to (6.center);
		\draw[directed, out=180, in=90] (7.center) to (2.center);
\end{tikzpicture}.
\end{align*}
\end{example}

\begin{proposition}\label{prop:resolvecrossing}
All fork diagram with $6$ boundary points are linear combinations of M-diagrams:
\begin{align*}
    &v_2=s_2.m_1=m_2+m_1-m_0, \hspace{0.5 in} v_3=s_4.m_1=m_3+m_1-m_0,\\
    &u_2=s_4.m_2=m_4+m_2-m_0, \hspace{0.5 in} u_3=s_2.m_3=m_4+m_3-m_0,\\
    &v_4=m_4+m_3+m_2+m_1-m_0.
\end{align*}
\end{proposition}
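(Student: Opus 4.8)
The plan is to kill every crossing with the skein relation and then tidy up with the web relations. Stacking a simple transposition $s_i$ at the bottom of a fork diagram $D$ and resolving the new crossing via the skein relation gives $s_i.D = D + \widehat D_i$, where $\widehat D_i$ is $D$ with an ``$H$'' (two trivalent vertices joined by an edge) inserted between the $i$th and $(i{+}1)$st boundary strands; so each identity of the form $s_i.m_j=(\cdots)$ amounts to identifying $\widehat{(m_j)}_i$. The easy case is when $i,i{+}1$ lie on a common arc of $D$: then the inserted $H$ closes a digon against that arc's trivalent vertex, and by $(\mathrm R2)$ this digon evaluates to $-2$ and collapses back to $D$, so $\widehat D_i=-2D$ and $s_i.D=-D$. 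In particular $s_2.m_0=-m_0$, since $2$ and $3$ both lie on the arc $(1,2,3)$ of $m_0$; I will need this at the end.

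For the four identities in the first two lines of the statement, the two swapped points always lie on \emph{different} arcs of $m_j$, so $\widehat{(m_j)}_i$ is an honest six-point diagram still carrying the single crossing between the two arcs of $m_j$. I would compute $\widehat{(m_1)}_2$ and $\widehat{(m_2)}_4$ by hand: after untangling the inherited crossing with the arc-over-vertex relations and the Reidemeister moves of Lemma~\ref{lem:R12}, a single square (together with a digon) becomes visible, and a short run of $(\mathrm R1)$–$(\mathrm R3)$ rewrites the diagram in the M-diagram pictures, giving $\widehat{(m_1)}_2=m_2-m_0$ and $\widehat{(m_2)}_4=m_4-m_0$. Adding back the leftover copy of $m_j$ yields $v_2=s_2.m_1=m_2+m_1-m_0$ and $u_2=s_4.m_2=m_4+m_2-m_0$; the remaining two identities follow from the left–right reflection of all diagrams, which interchanges $s_2\leftrightarrow s_4$, $m_2\leftrightarrow m_3$, $v_2\leftrightarrow v_3$, $u_2\leftrightarrow u_3$ while fixing $m_0,m_1,m_4$. (As a purely mechanical alternative one can instead expand each of $m_i,v_i,u_i$ in the non-elliptic web basis $\mathcal W^2$, which has only five elements, and verify all five relations as identities of coordinate vectors.)

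The last identity is then formal. Since $T_4$ is obtained from $T_3$ by transposing the entries $2$ and $3$, the bijection $\phi$ gives $v_4=s_2.v_3$ in $D^{(2^3)}$; substituting the identity $v_3=s_4.m_1=m_3+m_1-m_0$ already proved and using linearity together with $s_2.m_0=-m_0$, $s_2.m_3=m_4+m_3-m_0$, $s_2.m_1=m_2+m_1-m_0$ gives
\[ v_4=s_2.m_3+s_2.m_1-s_2.m_0=(m_4+m_3-m_0)+(m_2+m_1-m_0)+m_0=m_4+m_3+m_2+m_1-m_0. \]

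The main obstacle is the explicit reduction of $\widehat{(m_1)}_2$ and $\widehat{(m_2)}_4$ in the second paragraph: one must carry the inherited arc-crossing through the $H$-insertion, recognize the polygon (digon or square) that the Reidemeister normalization exposes, and keep the signs from $(\mathrm R1)$–$(\mathrm R3)$ straight. Everything else — the same-arc reduction, the reflection argument, and the final linear step — is routine bookkeeping.
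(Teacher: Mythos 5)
Your proposal is correct and follows essentially the same route as the paper: resolve the single crossing via the skein relation (the ``$H$-insertion'' term), simplify the resulting web with (R2), (R3) and the Reidemeister moves to identify it as $m_j - m_0$, and then obtain the $v_4$ identity by applying a transposition to an already-established identity and using linearity together with $s_i.m_0=-m_0$. The only cosmetic differences are that you invoke the left--right reflection symmetry to halve the number of explicit diagram computations and derive $v_4$ from $s_2.v_3$ rather than $s_4.v_2$; both are harmless.
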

\begin{proof}
The first four identities follow similar computations. We illustrate one of them as an example.
\begin{align*}
   & u_2= \begin{tikzpicture}[scale=0.3,baseline=0]
		\node [] (0) at (-1, -1) {};
		\node [] (1) at (-3, -1) {};
		\node [] (9) at (3, 2.5) {};
		\node [] (2) at (-5, -1) {};
		\node [] (3) at (1, -1) {};
		\node [] (4) at (3, -1) {};
		\node [] (5) at (5, -1) {};
		\node [] (6) at (-1, 0) {};
		\node [] (7) at (2, 1) {};
		\node [] (8) at (-3, 0.5) {};
		\draw[directed, out=0, in=90] (7.center) to (5.center);
		\draw[reverse directed] (3.center) to (7.center);
		\draw[directed] (6.center) to (0.center);
		\draw[directed, out=180, in=90] (6.center) to (1.center);
		\draw[directed, in=180] (6.center) to (9.center);
		\draw[out=0] (9.center) to (4.center);
		\draw[directed, out=180, in=90] (7.center) to (2.center);
\end{tikzpicture}=\begin{tikzpicture}[scale=0.3,baseline=0]
		\node  (0) at (-1, -1) {};
		\node  (1) at (-3, -1) {};
		\node  (9) at (3, 2.5) {};
		\node  (2) at (-5, -1) {};
		\node  (3) at (1, -1) {};
		\node  (4) at (3, -1) {};
		\node  (5) at (5, -1) {};
		\node  (6) at (-1, 0) {};
		\node  (7) at (2, 1) {};
		\node  (8) at (-3, 0.5) {};
		\node  (10) at (3, 1) {};
		\node  (11) at (3.75, 0) {};
		\draw [reverse directed] (3.center) to (7.center);
		\draw [directed] (6.center) to (0.center);
		\draw [directed, in=90, out=180] (6.center) to (1.center);
		\draw [directed, in=180] (6.center) to (9.center);
		\draw [directed, in=90, out=180] (7.center) to (2.center);
		\draw [out=0] (9.center) to (10.center);
		\draw[directed] (7.center) to (10.center);
		\draw (10.center) to (11.center);
		\draw[directed] (11.center) to (4.center);
		\draw[directed] (11.center) to (5.center);
\end{tikzpicture}
+
\begin{tikzpicture}[scale=0.3,baseline=0]
		\node  (0) at (-1, -1) {};
		\node  (1) at (-3, -1) {};
		\node  (9) at (3, 2.5) {};
		\node  (2) at (-5, -1) {};
		\node  (3) at (1, -1) {};
		\node  (4) at (3, -1) {};
		\node  (5) at (5, -1) {};
		\node  (6) at (-1, 0) {};
		\node  (7) at (2, 1) {};
		\node  (8) at (-3, 0.5) {};
		\draw [reverse directed] (3.center) to (7.center);
		\draw [directed] (6.center) to (0.center);
		\draw [directed, in=90, out=180] (6.center) to (1.center);
		\draw [directed, in=180] (6.center) to (9.center);
		\draw [directed, in=90, out=180] (7.center) to (2.center);
		\draw [directed, out=0, in=90] (7.center) to (4.center);
		\draw [out=0] (9.center) to (5.center);
\end{tikzpicture}\\
&=\begin{tikzpicture}[scale=0.3,baseline=0]
		\node  (0) at (-1, -1) {};
		\node (1) at (-3, -1) {};
		\node  (9) at (3, 1.5) {};
		\node  (2) at (-5, -1) {};
		\node  (3) at (1, -1) {};
		\node  (4) at (3, -1) {};
		\node  (5) at (5, -1) {};
		\node  (6) at (-1, 0) {};
		\node  (7) at (3.75, 0) {};
		\node  (8) at (-3, 0.5) {};
		\node  (11) at (3.75, 0) {};
		\draw [directed] (6.center) to (0.center);
		\draw [directed, in=90, out=180] (6.center) to (1.center);
		\draw [directed, in=180] (6.center) to (9.center);
		\draw [directed, in=90, out=180] (7.center) to (2.center);
		\draw [directed] (11.center) to (4.center);
		\draw [directed] (11.center) to (5.center);
		\draw [out=0] (9.center) to (3.center);
\end{tikzpicture}
-\begin{tikzpicture}[scale=0.3,baseline=0]
		\node  (0) at (-1, -1) {};
		\node  (1) at (-3, -1) {};
		\node  (9) at (3, 2.5) {};
		\node  (2) at (-5, -1) {};
		\node  (3) at (1, -1) {};
		\node  (4) at (3, -1) {};
		\node  (5) at (5, -1) {};
		\node  (6) at (-1, 0) {};
		\node  (7) at (2, 1) {};
		\node  (8) at (-3, 0.5) {};
		\node  (11) at (3.75, 0) {};
		\draw [directed] (6.center) to (0.center);
		\draw [directed, in=90, out=180] (6.center) to (1.center);
		\draw [directed, in=180] (6.center) to (9.center);
		\draw [directed, in=90, out=180] (7.center) to (2.center);
		\draw [directed] (11.center) to (4.center);
		\draw [directed, out=0] (11.center) to (5.center);
		\draw [directed, out=180] (11.center) to (3.center);
		\draw [out=0, in=0]  (9.center) to (7.center);
\end{tikzpicture}
+
\begin{tikzpicture}[scale=0.3,baseline=0]
		\node  (0) at (-1, -1) {};
		\node  (1) at (-3, -1) {};
		\node  (9) at (3, 2.5) {};
		\node  (2) at (-5, -1) {};
		\node  (3) at (1, -1) {};
		\node  (4) at (3, -1) {};
		\node  (5) at (5, -1) {};
		\node  (6) at (-1, 0) {};
		\node  (7) at (2, 1) {};
		\node  (8) at (-3, 0.5) {};
		\draw [reverse directed] (3.center) to (7.center);
		\draw [directed] (6.center) to (0.center);
		\draw [directed, in=90, out=180] (6.center) to (1.center);
		\draw [directed, in=180] (6.center) to (9.center);
		\draw [directed, in=90, out=180] (7.center) to (2.center);
		\draw [directed, out=0, in=90] (7.center) to (4.center);
		\draw [out=0] (9.center) to (5.center);
\end{tikzpicture}=m_2-m_0+m_4.
\end{align*}
The last identity follows from the previous identities and the fact
\begin{align*}
    v_4&=s_4.v_2=s_4.(m_2+m_1-m_0)=(m_4+m_2-m_0)+(m_3+m_1-m_0)+m_0\\
    &=m_4+m_3+m_2+m_1-m_0.
\end{align*}
\end{proof}
\begin{example}
Following Example~\ref{mainexample},  
the transitioning matrix from the polytabloid basis $\mathcal{P}=\{T_0,T_1,T_2,T_3,T_4\}$ to the set of M-diagrams $\mathcal{M}=\{m_0,m_1,m_2,m_3,m_4\}$ as ordered set is uni-triangular.  In particular,  $\mathcal{M}$ constitutes a basis of $S^{(2,2,2)}$.
\end{example}
For a fork diagram $m$,  define  $c(m)$,   the number of crossings,  to be the number of pairs $\{(a,b),(c,d)\}$ such that one of the two cases hold
\begin{itemize}
\item  $(a,b)$ and $(c,d)$ are both left arcs in $m$,  $a<c<b<d$ or $c<a<d<b$. 
\item  $(a,b)$ and $(c,d)$ are both right arcs in $m$,  $a<c<b<d$ or $c<a<d<b$. 
\end{itemize}
We remark that the number of crossings define above is different from the actual number crossings one sees pictorially.  In particular,  crossings formed by a left arc and a right arc are not counted in the definition of $c(m)$ above.  Unless specified otherwise,  we will always refer to $c(m)$ when we mention the number of crossings.

Let $m$ be a fork diagram.  The boundary word associated to $m$ is obtained by first marking all of its left endpoints with $+$'s,  middle endpoints with $0$'s,  right endpoints with $-$'s,  then reading off the labels from left to right in order.

Let $w=(w_i)_{i=1}^{3n}\in \{+,0,-\}^{3n}$ be the boundary word of a fork diagram.  We impose the total order $-<0<+$ on the set of labels.  The number of inversions in $w$,  denoted as $\operatorname{Inv}(w)$,  is 
\begin{align*}
\operatorname{Inv}(w)=\# \{(i,j), 1\leq i<j\leq 3n \:|\:  w_i>w_j \}.  \label{eq:inversions}
\end{align*}

For two words $w_1,w_2$ in $\{+,0,-\}$,  we say $w_1\prec w_2$ if $w_1$ and $w_2$ differ at  exactly two (not necessarily adjacent) positions,  and $w_1$ is obtained from $w_2$ by one of the following: 
\begin{itemize}
\item Replacing a pair of $(+,0)$ with $(0,+)$,
\item Replacing a pair of $(+,-)$ with $(-,+)$,
\item Replacing a pair of $(0,-)$ with $(-,0)$.
\end{itemize} 

\begin{lemma}\label{lem:inv}
If $w\prec v$,  then $\operatorname{Inv}(w)< \operatorname{Inv}(v)$.
\end{lemma}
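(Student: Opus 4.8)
The plan is to first reinterpret the covering relation $\prec$ as a single transposition, and then to prove the inequality by a pair-by-pair analysis of the inversion count. Observe that in each of the three moves defining $w \prec v$ there are positions $i < j$ at which $v$ and $w$ differ, with $(v_i, v_j)$ equal to $(+,0)$, $(+,-)$, or $(0,-)$; in all three cases $v_i > v_j$ in the order $- < 0 < +$, and $w$ is obtained from $v$ by swapping the entries in positions $i$ and $j$, i.e.\ $w_i = v_j$, $w_j = v_i$, and $w_k = v_k$ for $k \notin \{i,j\}$. Thus it suffices to show that swapping two entries at positions $i < j$ with $v_i > v_j$ strictly decreases $\operatorname{Inv}$.

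Next I would write $\operatorname{Inv}(v) - \operatorname{Inv}(w) = \sum_{a < b}\big([v_a > v_b] - [w_a > w_b]\big)$ (Iverson brackets) and bound each summand according to how $\{a,b\}$ meets $\{i,j\}$. If $\{a,b\}\cap\{i,j\}=\emptyset$ the summand is $0$. The term $(a,b)=(i,j)$ contributes exactly $+1$, since it is an inversion of $v$ but, because $w_i = v_j < v_i = w_j$, not of $w$. If $\{a,b\}$ contains exactly one of $i,j$, let $k$ be the other index: when $k<i$ or $k>j$ the two relevant terms $\{(k,i),(k,j)\}$ (resp.\ $\{(i,k),(j,k)\}$) cancel in pairs and contribute $0$; when $i<k<j$ the two terms $(i,k)$ and $(k,j)$ contribute
\begin{align*}
\big([v_i > v_k] + [v_k > v_j]\big) - \big([v_j > v_k] + [v_k > v_i]\big) = g(v_i) - g(v_j),
\end{align*}
where $g(x) := [x > v_k] - [v_k > x]$ takes values in $\{-1,0,1\}$ and is weakly increasing on $\{-,0,+\}$, so $g(v_i) \ge g(v_j)$ because $v_i > v_j$. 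Summing all contributions gives $\operatorname{Inv}(v)-\operatorname{Inv}(w) \ge 1$, which is the claim.

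I do not expect a real obstacle here; the only place needing a moment's care is the set of indices $k$ strictly between $i$ and $j$, and the monotonicity of $g$ is precisely what forces those contributions to be nonnegative (they can in fact be strictly positive, e.g.\ when $v_k = 0$ and $(v_i,v_j)=(+,-)$, consistent with $\operatorname{Inv}$ sometimes dropping by more than $1$). Everything else is immediate bookkeeping, and the reformulation in the first step is what keeps that bookkeeping short.
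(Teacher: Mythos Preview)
Your proof is correct and follows essentially the same route as the paper: isolate the pair $(i,j)$ for the strict $+1$, observe that pairs avoiding $\{i,j\}$ are unchanged, that a third index $k$ outside $[i,j]$ gives cancelling contributions, and that an index $k$ strictly between $i$ and $j$ contributes nonnegatively. The only cosmetic difference is that the paper argues the last point by an explicit three-subcase check (for one representative choice of $(v_i,v_j)$), whereas you package it uniformly via the monotone function $g(x)=[x>v_k]-[v_k>x]$; this is a slight streamlining but not a different idea.
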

\begin{proof}
Suppose $w$ and $v$ differ at boundary point $a$ and $b$ for $1\leq a<b\leq 3n$.  The letters $w_i=v_i$ for $1\leq i<a$ and $b<i\leq 3n$ does not affect the count in $\operatorname{Inv}$ after swapping $v_a$ and $v_b$.  Also,  by looking at the letters between (and not including) positions $a$ and $b$,  the count among themselves remain constant before and after the swap.  Therefore we only consider a letter $v_t$ for $a<t<b$,  and form the pair against either position $a$ or position $b$.  We pick one of the three cases,   where $v_a=+$ and $v_b=0$,   and divide it further into three subcases.  The other scenarios follow a similar argument. 

1)  Suppose $v_t=+$.    Then $(t,b)$ contributes to the count in $\operatorname{Inv}$ for $v$ but not for $w$.  The value of $\operatorname{Iv}$ drops by one.

2) Suppose $v_t=0$,   it is symmetric to the above case and the value of $\operatorname{Iv}$ drops by one.

3) Suppose $v_t=-$,  then $(a,t)$ contributes to the count in $\operatorname{Inv}$ for $v$,  and $(a,t)$ contributes to the count in $\operatorname{Inv}$ for $w$.  The value of $\operatorname{Iv}$ remains constant.

The above argument shows that $w$ has a count smaller than or equal to the count of $v$.  Now we look at the pair $(a,b)$ itself,  which contributes to the count in $v$ but not $w$,  we have $\operatorname{In}(w)<\operatorname{In}(v)$.
\end{proof}

\begin{remark}
The set of boundary words for $\mathcal{P}^n$,  or equivalently for $\mathcal{M}^n$,  is in bijection with $\mathcal{T}^n$.  This bijection is defined by setting a vertex $i$ to be $+$ ($0$ and  $-$,  respectively) if it is in the first (second and third,  respectively) row of $T\in \mathcal{T}^n$.  One can take the transitive closure of $\preceq$ and define a new order on $\mathcal{T}^n$.  This is actually a partial order,  as Lemma~\ref{lem:inv} ensures the reflexiveness.   It is a refinement of the partial order $\leq$ on $\mathcal{T}^n$ defined at the beginning.  In fact,  $\leq$ can be defined by requiring the pairs of symbols to be adjacent in the definition of $\preceq$.  One can also show that the two orders are not the same.  For example,  the two tableaux $T=\begin{ytableau}
1 & 3 & 6\\
2 & 4 & 8\\
5 & 7 & 9
\end{ytableau}$ and  $S=\begin{ytableau}
1 & 3 & 4\\
2 & 6 & 8\\
5 & 7 & 9
\end{ytableau}$ are comparible in the transitive closure of $\preceq$ but not in $\leq$.  
\end{remark}

\begin{example}
We list the boundary word and number of crossings for all fork diagrams in Example~\ref{mainexample}.
\begin{center}
\begin{tabular}{|c|c|c|c|c|c|}
\hline
$m$ & $v_0=m_0$ & $v_1=m_1$ & $v_2$ & $v_3$ & $v_4$  \\
\hline
$w$ &  $+0-+0-$ & $+0+-0-$ &$++0-0-$&$+0+0--$& $++00--$\\
\hline
$c(m)$ & $0$ & $0$ & $1$ & $1$  & $2$ \\
\hline
\hline
$m$&$u_2$ & $u_3$ & $m_2$ & $m_3$ & $m_4$\\
\hline
$w$ & $w_4$ & $w_4$ &$w_2$&$w_3$& $w_4$\\
\hline
$c(m)$ & $1$ & $1$ & $0$ & $0$  & $0$\\
\hline
\end{tabular}
\end{center}

Here $w_i$ is the boundary word for $v_i$,  $0\leq i\leq 4$.   Also note $w_0\prec w_1\prec w_2\prec w_4$ and $w_0\prec w_1\prec w_3\prec w_4$.
\end{example}

\begin{theorem}\label{th:basis}
The M-diagrams $\mathcal{M}^{n}$ forms a basis for $S^{(n,n,n)}$.  
\end{theorem}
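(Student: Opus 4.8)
The plan is to establish a unitriangular change of basis from the polytabloid basis to $\mathcal M^n$. Since $\psi$ is a bijection, $|\mathcal M^n|=|\mathcal T^n|=\dim S^{(n^3)}$, so the transition matrix below will be square and it suffices to show it is invertible. I will work inside $D^{(n^3)}\simeq S^{(n^3)}$, under which $v_T$ corresponds to the polytabloid diagram $\phi(T)$, so that $\{\phi(T)\}_{T\in\mathcal T^n}$ is already a basis. Writing $w(m)$ for the boundary word of a fork diagram $m$, and using $w(\phi(T))=w(\psi(T))=w(T)$ (all three record which row of $T$ a given boundary point lies in), the goal becomes: for every $T\in\mathcal T^n$,
\begin{equation*}
\phi(T)=\psi(T)+\sum_{S\colon \operatorname{Inv}(w(S))<\operatorname{Inv}(w(T))}c_{T,S}\,\psi(S),\qquad c_{T,S}\in\mathbb Z .
\end{equation*}
Ordering $\mathcal T^n$ by increasing value of $\operatorname{Inv}(w(T))$ (with ties broken arbitrarily), this is a lower unitriangular system, hence invertible over $\mathbb Z$, and $\mathcal M^n$ is a basis. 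This is exactly the $n=2$ behaviour recorded in Proposition~\ref{prop:resolvecrossing} and the table following it.

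Two structural remarks drive the proof. First, for every fork diagram $m$ the word $w(m)$ is a lattice word --- in each prefix one has $\#+\ge\#0\ge\#-$, because inside every arc the left endpoint precedes the middle endpoint, which precedes the right endpoint --- so $w(m)=w(T_m)$ for a unique $T_m\in\mathcal T^n$. Second, an M-diagram is determined by its boundary word: a lattice word has a unique non-crossing matching of its $+$'s with its $0$'s and a unique non-crossing matching of its $0$'s with its $-$'s, so there is exactly one M-diagram, call it $\mu(m):=\psi(T_m)$, with boundary word $w(m)$. In particular, if $c(m)=0$ then $m$ has no crossing of two left arcs and none of two right arcs, i.e. $m$ is an M-diagram, and then $m=\mu(m)$.

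The key statement, to be proved for \emph{all} fork diagrams $m$, is that in $D^{(n^3)}$
\begin{equation*}
m=\mu(m)+\sum_j\varepsilon_j\,m_j,\qquad \varepsilon_j\in\{\pm1\},\ m_j\in\mathcal M^n,\ \operatorname{Inv}(w(m_j))<\operatorname{Inv}(w(m));
\end{equation*}
taking $m=\phi(T)$ yields the displayed identity above. I would prove this by induction on the pair $(\operatorname{Inv}(w(m)),c(m))$ in lexicographic order. The base case $c(m)=0$ is the remark just made. If $c(m)>0$, pick an innermost crossing of two left arcs, or of two right arcs, and resolve it by the relations (R1)--(R3) exactly as in the computations of Proposition~\ref{prop:resolvecrossing}. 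This rewrites $m$ as $m'+\sum_j\varepsilon_j n_j$, where $m'$ is the fork diagram obtained from $m$ by interchanging the two left (respectively right) endpoints of the two crossing arcs --- so $w(m')=w(m)$, hence $\mu(m')=\mu(m)$, and $c(m')=c(m)-1$, the latter because innermostness prevents the creation of a new crossing --- and each $n_j$ is a fork diagram whose boundary word is obtained from $w(m)$ by iterating the moves $(+,0)\mapsto(0,+)$, $(+,-)\mapsto(-,+)$, $(0,-)\mapsto(-,0)$, so that $\operatorname{Inv}(w(n_j))<\operatorname{Inv}(w(m))$ by Lemma~\ref{lem:inv}. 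Now $m'$ has strictly smaller $c$ at unchanged $\operatorname{Inv}$, and each $n_j$ has strictly smaller $\operatorname{Inv}$, so the induction hypothesis applies to all of them; substituting, and using $\mu(m')=\mu(m)$ together with $\operatorname{Inv}(w(\mu(n_j)))=\operatorname{Inv}(w(n_j))$, yields the statement for $m$, and with it the theorem.

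The one substantial step --- the main obstacle --- is the crossing-resolution claim for general $n$: one must verify that an innermost crossing in an arbitrary fork diagram reduces, via (R1)--(R3), precisely to the endpoint-swapped fork diagram plus a $\pm1$-combination of fork diagrams whose boundary words lie strictly below $w(m)$ under the moves above, and that innermostness really keeps the leading term's crossing number from increasing. Proposition~\ref{prop:resolvecrossing} is this claim for $n=2$; in general it is the same kind of local diagram calculation, the main effort being to track how the two crossing arcs and their neighbours reconnect.
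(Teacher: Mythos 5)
Your strategy coincides with the paper's: induct on the pair $(\operatorname{Inv}(w(m)),c(m))$ in lexicographic order, resolve one crossing at a time using the $n=2$ identities of Proposition~\ref{prop:resolvecrossing}, note that the leading term retains the boundary word while all other terms drop strictly in $\operatorname{Inv}$ by Lemma~\ref{lem:inv}, and obtain linear independence from the cardinality count $|\mathcal{M}^n|=|\mathcal{T}^n|=\dim S^{(n^3)}$. Your structural preliminaries (every fork diagram has a lattice boundary word; an M-diagram is determined by its boundary word) are correct and make the unitriangularity of the transition matrix more transparent than in the paper.

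The gap is the step you yourself flag as unverified: the claim that the leading term $m'$ satisfies $c(m')<c(m)$. This is exactly where the paper's proof does its work, so you cannot leave it as a remark. Two comments. First, your ``innermost crossing'' device is both undefined and unnecessary: if the two crossing right arcs have endpoints $p_1<p_2<p_3<p_4$, so that the crossing configuration is $\{(p_1,p_3),(p_2,p_4)\}$ and the resolved one is $\{(p_1,p_4),(p_2,p_3)\}$, a direct check over the five gaps determined by $p_1,\dots,p_4$ shows that a third right arc crosses the resolved pair exactly as often as the crossing pair, except when it runs from the gap $(p_1,p_2)$ to the gap $(p_3,p_4)$, where its contribution drops by two. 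Since the left arcs are untouched and the resolved pair itself loses one crossing, $c(m')\le c(m)-1$ for an \emph{arbitrary} choice of crossing; this is precisely the paper's region-$A$-to-$C$ analysis, and it also shows your claimed equality $c(m')=c(m)-1$ can fail. Second, you should say why the identities of Proposition~\ref{prop:resolvecrossing}, established for isolated pairs of arcs, may be applied inside an ambient fork diagram whose other arcs pass through the affected region; this rests on the fact that a fork diagram depends only on its arc partition (Lemma~\ref{lem:R12} and the lemma preceding it), which you use implicitly. A minor point: the coefficients $\varepsilon_j\in\{\pm1\}$ in your inductive claim are not preserved under substitution, since terms can combine; you only need integer coefficients with leading coefficient $1$.
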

\begin{proof}
For spanning,  we show that any polytabloid diagram can be written as a linear combination of M-diagrams.  We induct on the lexicographical order,  on the number of inversions in the boundary word,  followed by the number of crossings $c(m)$.  The base case for $c(m)=0$ is straightforward as the two bases coincide.   Now suppose $m$ is a fork diagram with $c(m)>0$.  Choose a crossing (formed by two left arcs intersecting or two right arcs intersecting) in $m$ and focus on the two arcs it involves.  We claim one can use Proposition~\ref{prop:resolvecrossing}  to rewrite it into M-diagrams with either strictly less number of crossings,  or whose boundary word has strictly lower inversions.  Notice this process may interfere with other arcs,  but the number of inversions only depends on the boundary labels,  which is determined by whether an endpoint is a left,  middle,  or right endpoint,  and this depends purely on the 14 cases given in Example~\ref{mainexample}.  Notice also that in most cases,  i.e.  as long as one starts with $v_2$,  $v_3$,  $v_4$,  one gets a linear combination of diagrams with strictly less inversions on the boundary words,  and the induction follows.

We now consider one of the remaining two cases ($u_2$ and $u_3$) and the other is similar.   Suppose there is a fork diagram $m$ containing $u_2$ as a subdiagram,  and one uses Proposition~\ref{prop:resolvecrossing} to rewrite it  into fork diagrams containing $m_4$,  $m_2$,  $m_1$ as subdiagrams.  The diagram containing $m_4$ is denoted as $m'$,  where the others have  boundary words with strictly less number of inversions.  We compare the pictures of $m$ and $m'$,  with right arcs highlighted in red.
\begin{align*}
m'=
\begin{tikzpicture}[scale=0.5,  baseline=0]
		\node [] (0) at (-4, -2) {};
		\node [] (1) at (-2, -2) {};
		\node [] (2) at (0.5, -2) {};
		\node [] (3) at (3.5, -2) {};
		\node [] (4) at (1.75, 0) {};
		\node [] (5) at (6, -2) {};
		\node [] (6) at (-3, 0) {};
		\node [] (7) at (1, 1.5) {};
		\node [] (8) at (-2, 1.5) {};
		\node [] (9) at (-6, -2) {};
		\filldraw [fill=pink,  draw=black] (-1.25, -2)  rectangle ++ (1,1) ;
		\filldraw [fill=pink,  draw=black] (1.5, -2)  rectangle ++ (1,1) ;
		\filldraw [fill=pink,  draw=black] (4.25, -2)  rectangle ++ (1,1) ;
		\node (10) at (-0.75,-1.5) {A};
		\node (11) at (2,-1.5) {B};
		\node (12)at (4.75,-1.5) {C};
		\draw [directed,  out=180,  in=90,  thick] (6.center) to (0.center);
		\draw [directed,  out=0,  in=90,  red,  thick]  (6.center) to (1.center);
		\draw [out=90,  in=180,  red,  thick]  (6.center) to (7.center);
		\draw [directed,  out=0,  in=90,  red,  thick] (7.center) to (5.center);
		\draw [directed,  out=180,  in=90,  red,  thick]  (4.center) to (2.center);
		\draw  [directed,  out=0,  in=90,  red,  thick]  (4.center) to (3.center);
		\draw [out=0,  in=90,  thick] (8.center) to (4.center);
		\draw  [directed,  out=180,  in=90,  thick] (8.center) to (9.center);
\end{tikzpicture},\hspace{.5 in}
m=
\begin{tikzpicture}[scale=0.5,  baseline=0]
		\node [] (0) at (-4, -2) {};
		\node [] (1) at (-2, -2) {};
		\node [] (2) at (0.5, -2) {};
		\node [] (3) at (3.5, -2) {};
		\node [] (4) at (2.75, 1.5) {};
		\node [] (5) at (6, -2) {};
		\node [] (6) at (-3, 0) {};
		\node [] (8) at (-2, 1.5) {};
		\node [] (9) at (-6, -2) {};
		\filldraw [fill=pink,  draw=black] (-1.25, -2)  rectangle ++ (1,1) ;
		\filldraw [fill=pink,  draw=black] (1.5, -2)  rectangle ++ (1,1) ;
		\filldraw [fill=pink,  draw=black] (4.25, -2)  rectangle ++ (1,1) ;
			\node at (-0.75,-1.5) {A};
		\node at (2,-1.5) {B};
	\node at (4.75,-1.5) {C};
		\draw [directed,  out=180,  in=90,  thick] (6.center) to (0.center);
		\draw [directed,  out=0,  in=90,  red,  thick]  (6.center) to (1.center);
		\draw [directed,  out=180,  in=90,  red,  thick]  (4.center) to (2.center);
		\draw  [directed,  out=0,  in=90,  red,  thick]  (4.center) to (5.center);
		\draw [out=0,  in=180,  thick] (8.center) to (4.center);
		\draw [directed,  out=0,  in=90,  red,  thick] (6.center) to (3.center);
		\draw  [directed,  out=180,  in=90,  thick] (8.center) to (9.center);
\end{tikzpicture}
\end{align*}

We may assume that any two left arcs (or right arcs) intersect at most once due to Lemma~\ref{lem:R12}.  Notice the set of left arcs remains unchanged in $m$ and $m'$.  To count $c(m)$ we observe the remaining right arcs.  We first discuss the case when another right arc starts in region A and ends in region C.   This arc does not intersect the red arcs in $m'$ but intersects both red arcs in $m$.   The number of crossings it induces is bigger in  $m$ than that in $m'$.  In all other cases,  the number of crossings in $m$ and $m'$ results in the same count and is left to the reader.  Taking into account the red arcs shown in the picture,  which gives one more crossing in $m$ than $m'$,  we have shown that $c(m')<c(m)$.

Linear independence follows from the fact that $\mathcal{P}^n$ and $\mathcal{M}^n$ have the same cardinality which is equal to the dimension of $S^{(n^3)}$.
\end{proof}

This combined with Theorem~\ref{unitriangular} leads us to the following result:

\begin{corollary}\label{alsouni}
There is a unitriangular base change between any pair of the three bases: the polytabloid basis $\mathcal{P}^n$,  the non-elliptic basis $\mathcal{W}^n$,  and the M-diagram basis $\mathcal{M}^n$.
\end{corollary}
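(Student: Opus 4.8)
The plan is to exhibit a single total order on the common index set $\mathcal{T}^n$ with respect to which all three base-change matrices are unitriangular; since inverses and products of such matrices (with the index set ordered in a fixed way) are again of the same form, this settles all three pairs in both directions simultaneously. Here ``unitriangular with respect to a total order $\blacktriangleleft$'' means that the $(S,T)$-entry vanishes unless $S\blacktriangleleft T$ or $S=T$, and equals $1$ when $S=T$. Throughout I identify $\mathcal{P}^n$, $\mathcal{M}^n$ and $\mathcal{W}^n$ with $\mathcal{T}^n$ via $\phi$, $\psi$, and the bijection underlying Theorem~\ref{unitriangular}.

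The first step is to extract from the proof of Theorem~\ref{th:basis} the sharpened statement that for every $T\in\mathcal{T}^n$,
\begin{equation*}
v_T \;=\; \psi(T)\;+\;\sum_{S} c_S\,\psi(S),
\end{equation*}
where the sum runs over those $S\in\mathcal{T}^n$ whose boundary word has strictly fewer inversions than that of $T$. When $c(\phi(T))=0$ the fork diagram $\phi(T)$ has no crossings, hence is an M-diagram; its boundary word is that of $T$, so by uniqueness of an M-diagram with a given boundary word (the Remark following Lemma~\ref{lem:inv}) we get $\phi(T)=\psi(T)$. When $c(\phi(T))>0$, resolving a crossing via Proposition~\ref{prop:resolvecrossing} produces one term with coefficient $1$ and the same boundary word, namely the diagram with that crossing straightened, together with terms whose boundary words have strictly fewer inversions; iterating on the coefficient-$1$ term eventually reaches the unique M-diagram with the boundary word of $T$, which is $\psi(T)$, still with coefficient $1$. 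In particular $\mathcal{P}^n\to\mathcal{M}^n$ is triangular for the preorder ``number of inversions of the boundary word, increasing''.

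Next I fix any total order $\blacktriangleleft$ on $\mathcal{T}^n$ along which $\operatorname{Inv}$ of the boundary word is weakly increasing (ties broken arbitrarily), and check that $\blacktriangleleft$ refines the shadow containment order $\leq$. Indeed, if $S<T$ in $\leq$ then $S$ lies strictly below $T$ in the transitive closure of $\prec$ (the Remark following Lemma~\ref{lem:inv}), so there is a $\prec$-chain of positive length from $S$ to $T$, and Lemma~\ref{lem:inv} applied along this chain shows the boundary word of $S$ has strictly fewer inversions than that of $T$; hence $S\blacktriangleleft T$. Consequently Theorem~\ref{unitriangular} gives that $\mathcal{P}^n\to\mathcal{W}^n$ is $\blacktriangleleft$-unitriangular, and the displayed identity gives that $\mathcal{P}^n\to\mathcal{M}^n$ is $\blacktriangleleft$-unitriangular. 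Taking inverses and composing, $\mathcal{W}^n\to\mathcal{P}^n$, $\mathcal{M}^n\to\mathcal{P}^n$, $\mathcal{W}^n\to\mathcal{P}^n\to\mathcal{M}^n$ and $\mathcal{M}^n\to\mathcal{P}^n\to\mathcal{W}^n$ are all $\blacktriangleleft$-unitriangular, which is the assertion of the corollary.

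The genuine obstacle is the compatibility claim of the last paragraph: that the inversion count, which drives the induction in Theorem~\ref{th:basis}, is coarse enough to be refined by the shadow containment order governing Theorem~\ref{unitriangular}; once this is in hand the rest is bookkeeping with triangular matrices over a common index set. A second, milder point to get right is the exact leading-term statement extracted from the proof of Theorem~\ref{th:basis} --- coefficient precisely $1$ on $\psi(T)$, and all remaining terms strictly below $T$ in inversion count --- which one verifies directly against the five identities in Proposition~\ref{prop:resolvecrossing}.
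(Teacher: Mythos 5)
Your argument is correct and takes essentially the same route as the paper, whose entire proof of this corollary is the one-line observation that the unitriangularity of $\mathcal{P}^n\to\mathcal{M}^n$ extracted from the proof of Theorem~\ref{th:basis} combines with Theorem~\ref{unitriangular}. The only substantive detail you add is the explicit check that the inversion-count order driving Theorem~\ref{th:basis} and the order $\leq$ governing Theorem~\ref{unitriangular} admit a common refining total order, a compatibility point the paper leaves implicit.
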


\bibliography{litlist} \label{references}
\bibliographystyle{amsalpha}

\end{document}